\newtheorem{thm}{Theorem}[section]
\newtheorem{cor}[thm]{Corollary}
\newtheorem{lem}[thm]{Lemma}
\newtheorem{prop}[thm]{Proposition}
\newtheorem{rmk}[thm]{Remark}
\newtheorem{defi}[thm]{Definition}
\numberwithin{equation}{section}\def \N {\mathbb N}
\def \Q {\mathbb Q}
\def \R {\mathbb R}
\def \C {\mathbb C}
\def \Z {\mathbb Z}
\def \P {\mathbb P}
\begin{document}

\title{On cscK resolutions of conically singular cscK varieties}

\author[1]{Claudio Arezzo \thanks{arezzo@ictp.it}}
\author[2]{Cristiano Spotti \thanks{c.spotti@dpmms.cam.ac.uk}}

\affil[1]{ICTP, International Center for Theoretical Physics\\}
\affil[2]{DPMMS, University of Cambridge}

\renewcommand\Authands{ and }

\maketitle

\begin{abstract}  In this note we discuss the problem of resolving conically singular cscK varieties to construct smooth cscK manifolds, showing a glueing result for (some) crepant resolutions of cscK varieties with discrete automorphism groups.  
 \end{abstract}

\vspace{5 mm}


\section{Introduction}

In recent years the problem of constructing special Riemannian metrics via the so-called glueing techniques has been object of intense investigations: just to mention a couple of results which share similarities with the content of this paper,  we may recall the Joyce's construction \cite{Joyce} of metrics with holonomy $G_2$ and $Spin(7)$ and the results on the existence of constant scalar curvature  K\"ahler (cscK, for short)  metrics on complex blow-ups \cite{Arezzo-Pacard}.

The setting of this paper is quite similar to the above mentioned ``generalized Kummer's constructions'': namely, we start by taking some singular complex variety $X$ which admits a cscK metric, in a sense which we will make precise later, and we prove the existence of cscK metrics on a \emph{resolution} $\hat{X}$ of $X$. Compared to the previous results, the main new feature consists in the fact that we now allow the singularities to be not just of isolated orbifold type \cite{Arezzo-Pacard}, \cite{Rollin-Singer}, \cite{Arezzo-Lena-Mazzieri1}, \cite{Arezzo-Lena-Mazzieri2}, but, 
more generally, we consider isolated singularities modeled on Calabi-Yau (CY for short) cones of a certain type. In the special case of CY 3-folds, some results in this direction, obtained using $G_2$-techniques, were proved by Chan in \cite{Chan}.

We say that a variety $X$ admits a \emph{conically singular cscK metric} $\omega$ if the metric $\omega$ is smooth on the non-singular locus $X^{reg}$ of the variety and, roughly speaking, there exist local biholomorphisms  which identify a neighborhood of a singularity with a neighborhood of the apex in a CY cone $(Y,\omega_Y)$ in such a way that in these adapted  ``charts''  the metric $\omega$ is asymptotical to a  CY cone metric $\omega_{Y}$ at some polynomial rate $\mu$ with respect to the radial cone distance (see Section \ref{MD} for the precise definitions).

The existence  of asymptotically (AC) crepant CY resolutions $(Y,\omega_Y)$  of a given cone $(Y,\omega_Y)$ is a problem which has attracted great attention by many authors \cite{Goto}, \cite{VCov}, \cite{VCov2}, \cite{Conlon-Hein} being a generalization of the celebrated Kronheimer's construction of gravitational instantons \cite{Kronheimer}. Here we will essentially use the result of \cite{VCov}.
Under the existence of such CY resolutions of a normal variety $X$ with only discrete automorphisms, we show that we can perform a glueing of a rescaled model of the asymptotic CY resolution with the complementary of a deleted neighborhood of the singularity on the original variety $X$. By standard perturbation arguments,  we obtain a new variety $\hat{X}$ which then admits smooth cscK metrics. 
Our main Theorem is thus the following:

\begin{thm}\label{MT}
Let $(X,\omega)$ be a complex $n$-dimensional with $n\geq 3$ conically singular cscK variety  with discrete automorphism group. Assume that the cscK metric $\omega$ on $X^{reg}$ is asymptotic to a metric Calabi-Yau cone at rate $\mu_i>0$ near $p_i\in \mbox{Sing}(X)$ with $i=1,\dots, l$, and that the Calabi-Yau cone singularities admit asymptotically conical Calabi-Yau resolutions $(\hat{Y}_i,\omega_{\hat{Y}_i})$ of rate $-2n$ and $i\partial\bar\partial$-exact at infinity (compare Section \ref{MD} for the definitions). 

Then there exists a crepant resolution $\hat{X}$ of $X$ admitting a family of smooth cscK metrics $\hat{\omega}_{\lambda}$, for  a parameter $\lambda \in \R^{+}$ small enough. Moreover, the sign of the scalar curvature of the cscK metrics $\hat{\omega}_{\lambda}$ on the resolution is the same as the one of the metric $\omega$ and, finally, the cscK manifolds $(\hat{X}, \hat{\omega}_{{\lambda}})$ converge to the singular cscK space $(X,\omega)$ in the Gromov-Hausdorff topology as $\lambda$ tends to zero.
\end{thm}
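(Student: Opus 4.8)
The plan is to follow the by-now-standard scheme for gluing constructions of canonical K\"ahler metrics, in the spirit of \cite{Arezzo-Pacard}, adapted to the present non-orbifold conical setting; it proceeds in four stages.

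\emph{Construction of an approximate solution.} Fix a small parameter $\lambda>0$. Near each $p_i$ the conically singular hypothesis provides a biholomorphism identifying a punctured neighbourhood of $p_i$ in $X$ with a neighbourhood of the apex of the cone $Y_i$, under which the K\"ahler potential of $\omega$ agrees with that of the cone metric $\omega_{Y_i}$ up to an error of order $r^{\mu_i}$, $r$ the cone distance. On the resolution side, $(\hat{Y}_i,\omega_{\hat{Y}_i})$ being an asymptotically conical Calabi--Yau resolution of rate $-2n$, the Ricci-flat form $\omega_{\hat{Y}_i}$ agrees, through its asymptotic identification with the cone, with $\omega_{Y_i}$ up to order $r^{-2n}$; being $i\partial\bar\partial$-exact at infinity means $\omega_{\hat{Y}_i}-\omega_{Y_i}=i\partial\bar\partial\psi_i$ there, so the comparison makes sense at the level of potentials and the class of $\omega_{\hat{Y}_i}$ is represented by a compactly supported form. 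One then excises from $X$ a ball of radius $\sim\lambda^{a}$ around each $p_i$, for a suitable $a\in(0,1)$ to be fixed, glues in a copy of $\hat{Y}_i$ rescaled by $\lambda$, and interpolates the two potentials by a cut-off function supported in the resulting annular neck. For $\lambda$ small this produces a genuine K\"ahler form $\hat{\omega}_\lambda$ on a fixed crepant resolution $\hat{X}$ of $X$ (crepant since $\hat{X}\to X$ restricts to the crepant $\hat{Y}_i\to Y_i$ over each $p_i$ and is an isomorphism elsewhere), lying in the class $\pi^*[\omega]+\lambda^2\,\epsilon$ where $\epsilon$ collects the (compactly supported) classes of the $\omega_{\hat{Y}_i}$.

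\emph{Error estimate.} Away from the necks $\hat{\omega}_\lambda$ equals either $\omega$, which is cscK on $X^{reg}$, or the Ricci-flat $\lambda^2\omega_{\hat{Y}_i}$, which is scalar-flat; hence $S(\hat{\omega}_\lambda)-\underline{S}_\lambda$, with $\underline{S}_\lambda$ the cohomological average scalar curvature of $[\hat{\omega}_\lambda]$, is supported in the $l$ necks. There, using that the cone metric $\omega_{Y_i}$ is itself scalar-flat, that $\omega$ and $\lambda^2\omega_{\hat{Y}_i}$ approach it at the stated rates, and the bounds obtained by differentiating the cut-off, one estimates this error by $C\lambda^{\delta}$ in a weighted H\"older norm adapted to the neck, for some $\delta>0$; making $\delta$ large enough is possible by choosing $a$ appropriately, and this is one of the places where the restriction $n\ge3$ is used.

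\emph{Linear theory --- the crux.} Restricted to the class $[\hat{\omega}_\lambda]$, the linearisation at $\hat{\omega}_\lambda$ of $\omega'\mapsto S(\omega')-\underline{S}_\lambda$ is $-\mathcal{L}_{\hat{\omega}_\lambda}$ plus first-order terms that are small because $\hat{\omega}_\lambda$ is approximately cscK, where $\mathcal{L}_{\hat{\omega}_\lambda}$ is the Lichnerowicz operator, a non-negative self-adjoint fourth-order elliptic operator whose kernel consists of the functions with holomorphic $(1,0)$-gradient. The hypothesis that $\mathrm{Aut}(X)$ is discrete is essential here: on $X$ the only such functions are constants, so $\mathcal{L}_\omega$ is an isomorphism between the $L^2$-orthogonal complements of the constants in the relevant function spaces. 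On each $\hat{Y}_i$ the operator $\mathcal{L}_{\omega_{\hat{Y}_i}}$ (which, $\omega_{\hat{Y}_i}$ being Ricci-flat, is essentially $\Delta^2$) is Fredholm between suitable weighted spaces, its indicial roots being those read off the cone $Y_i$; the choices of weight, of the rate $-2n$ and of $i\partial\bar\partial$-exactness at infinity are made precisely so that no indicial root lies in the relevant range and the attendant obstruction space is trivial, yielding a bounded right inverse on the complement of the constants. The main work --- and the principal obstacle --- is then to splice these model inverses into an approximate right inverse $P_\lambda$ for $\mathcal{L}_{\hat{\omega}_\lambda}$ on $\hat{X}$, acting on functions of $\hat{\omega}_\lambda$-mean zero, and to bound its operator norm by a fixed negative power $\lambda^{-\kappa}$ of $\lambda$. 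This forces one to match the weights carefully across the necks, to rule out a collapse of the low spectrum of $\mathcal{L}$ (again via discreteness of $\mathrm{Aut}(X)$ and Ricci-flatness of the models), and to run a parametrix iteration upgrading the approximate inverse to an exact one.

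\emph{Fixed point and conclusions.} With $P_\lambda$ in hand, the equation $S(\hat{\omega}_\lambda+i\partial\bar\partial u)=\underline{S}_\lambda$ for $u$ of mean zero becomes, after composing with $P_\lambda$, a fixed-point problem $u=-P_\lambda\bigl(S(\hat{\omega}_\lambda)-\underline{S}_\lambda\bigr)+P_\lambda\,Q_\lambda(u)$, where $Q_\lambda$ gathers the terms at least quadratic in $u$. Since $\|P_\lambda\|\lesssim\lambda^{-\kappa}$, the inhomogeneous term is $O(\lambda^{\delta-\kappa})$ and $Q_\lambda$ obeys a quadratic estimate with $\lambda$-powers under control; choosing $a$ and the weights so that $\delta$ beats the loss with room to spare makes the right-hand side a contraction on a small ball, whose fixed point gives the sought smooth cscK metric $\hat{\omega}_\lambda$ on $\hat{X}$. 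Finally, crepancy gives $c_1(\hat{X})=\pi^*c_1(X)$, while the class $[\hat{\omega}_\lambda]=\pi^*[\omega]+\lambda^2\epsilon$ tends to $\pi^*[\omega]$ as $\lambda\to0$; hence the cohomological constant $\underline{S}_\lambda=n\bigl(c_1(\hat{X})\cdot[\hat{\omega}_\lambda]^{n-1}\bigr)/[\hat{\omega}_\lambda]^n$ converges to $\underline{S}(X,\omega)$, so it has the same sign for $\lambda$ small. The Gromov--Hausdorff convergence $(\hat{X},\hat{\omega}_\lambda)\to(X,\omega)$ is then read directly off the construction: outside shrinking neighbourhoods of the $p_i$ the metric equals $\omega$ up to an error tending to $0$, while each glued cap has diameter tending to $0$.
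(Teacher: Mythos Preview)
Your outline is correct and follows essentially the same route as the paper: pre-glue with a rescaled AC CY model on an annulus of radius $\sim\lambda^{a}$ (the paper fixes $a=\tfrac{2n}{2n+\mu}$), set up weighted H\"older spaces with weights $\delta\in(4-2n,0)$, splice the model inverses on $X^{reg}$ and on $\hat Y_i$ to invert the linearised operator, and close by a contraction-mapping argument.

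Two small points of divergence are worth flagging. First, your claim that $S(\hat\omega_\lambda)-\underline S_\lambda$ is \emph{supported in the necks} is not quite right: on the glued-in copy of $\hat Y_i$ one has $S(\hat\omega_\lambda)=0$ while $\underline S_\lambda$ is in general nonzero (and on the $X$-side $S=Sc(\omega)\neq\underline S_\lambda$), so the error is a nonzero constant on those regions; what is true, and what the paper uses, is that this constant contribution is $\mathcal O(r_\lambda^{4-\delta})$ in the weighted norm and hence harmless. Second, where you allow the inverse $P_\lambda$ to blow up like $\lambda^{-\kappa}$, the paper actually obtains a \emph{uniform} bound $\lVert\tilde{\mathbb L}_\lambda^{-1}\rVert\le C$ independent of $\lambda$, and it handles the one-dimensional kernel of constants not via mean-zero functions but by Szekelyhidi's trick of replacing $\mathbb L_\omega$ with $\varphi\mapsto\mathbb L_\omega\varphi+\varphi(q)$; both devices are adequate, but the uniform bound makes the final numerology $\tfrac{2n(\mu+2-\delta)}{2n+\mu}>2-\delta$ (whence $\delta>4-2n$, using $n\ge3$) particularly clean.
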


We highlight the following simple Corollary which partially generalizes to any dimension some results of Chan in dimension three \cite{Chan} (e.g., Theorem $5.2$).

\begin{cor} \label{CYcor} Let $(X,\omega)$ be a conically singular Ricci-flat CY variety satisfying the hypothesis of the main Theorem \ref{MT}. Then on the crepant resolution $\hat X$ we have a family $\omega_\lambda$ of smooth Ricci flat CY metrics converging to the singular space $(X,\omega)$ in the Gromov-Hausdorff topology. 
\end{cor}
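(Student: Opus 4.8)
The plan is to read this off Theorem~\ref{MT} combined with the classical fact that a scalar-flat K\"ahler metric on a compact K\"ahler manifold with vanishing first Chern class is Ricci-flat. A conically singular Ricci-flat CY variety is in particular a conically singular cscK variety: its local models at the singular points are, by definition, metric Calabi-Yau cones, the rates $\mu_i>0$ are those governing the asymptotics of $\omega$ near the $p_i$, and the scalar curvature of $\omega$ vanishes identically. Assuming, as in the statement, that the cone singularities carry asymptotically conical Calabi-Yau resolutions $(\hat Y_i,\omega_{\hat Y_i})$ of rate $-2n$ that are $i\partial\bar\partial$-exact at infinity, and that the automorphism group of $X$ is discrete, all hypotheses of Theorem~\ref{MT} are met. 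Hence there is a crepant resolution $\hat X$ of $X$ carrying a family $\hat\omega_\lambda$, $\lambda\in\R^+$ small, of smooth cscK metrics with $(\hat X,\hat\omega_\lambda)\to(X,\omega)$ in the Gromov-Hausdorff topology as $\lambda\to 0$.

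Next I would invoke the part of Theorem~\ref{MT} asserting that the sign of the scalar curvature of $\hat\omega_\lambda$ equals that of $\omega$: since the latter is zero, each $\hat\omega_\lambda$ is scalar-flat. Moreover $\hat X$, being a crepant resolution of the Calabi-Yau variety $X$, satisfies $K_{\hat X}=\pi^*K_X$ with $K_X$ ($\Q$-)trivial, so $c_1(\hat X)=0$ in $H^2(\hat X;\R)$. On a compact connected K\"ahler manifold with $c_1=0$ the Ricci form of any K\"ahler metric is $\partial\bar\partial$-exact, so $\mathrm{Ric}(\hat\omega_\lambda)=i\partial\bar\partial h$ for some $h\in C^\infty(\hat X)$; tracing against $\hat\omega_\lambda$ and using scalar-flatness gives $\Delta_{\hat\omega_\lambda}h=0$, whence $h$ is constant and $\mathrm{Ric}(\hat\omega_\lambda)=0$. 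Thus the $\hat\omega_\lambda$ are Ricci-flat K\"ahler (Calabi-Yau) metrics on $\hat X$, and the Gromov-Hausdorff degeneration to $(X,\omega)$ is the one already supplied by Theorem~\ref{MT}.

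Since all the analytic work (the glueing, the weighted analysis, the perturbation) is carried out in the proof of Theorem~\ref{MT}, there is no genuine obstacle here; the one point deserving a little care is verifying that the Ricci-flat CY setting really does provide data meeting the precise rate ($-2n$) and exactness (``$i\partial\bar\partial$-exact at infinity'') conditions of Theorem~\ref{MT} --- which is exactly why the Corollary is stated conditionally on ``satisfying the hypothesis of the main Theorem''. One could alternatively remark that $\hat X$ also admits Ricci-flat K\"ahler metrics by Yau's theorem, but that observation alone would not produce the explicit convergence to the prescribed singular space $(X,\omega)$, which is the real content of the Corollary.
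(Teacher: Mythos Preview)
Your argument is correct and essentially identical to the paper's own proof: apply Theorem~\ref{MT}, use that $c_1(\hat X)=0$ (crepancy over a CY) together with the scalar-curvature sign/value statement to get $S_\lambda=0$, then write $\mathrm{Ric}(\hat\omega_\lambda)=i\partial\bar\partial h$ by the $\partial\bar\partial$-lemma and trace to conclude $h$ is harmonic, hence constant. The only cosmetic difference is that the paper deduces $S_\lambda=0$ directly from the intersection identity $0=c_1(\hat X)\cup[\omega_\lambda]^{n-1}=nS_\lambda\,\mathrm{Vol}(\omega_\lambda)$ rather than quoting the sign clause of Theorem~\ref{MT}.
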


The organization of the paper is as follow. In Section \ref{MD} we give the precise definitions of what we mean to be asymptotically cscK and we make some remarks on these types of metrics. In the way, we show that the condition of having a \emph{global} K\"ahler metric on $X^{reg}$ which locally near the singularities coincides with the local model CY cone metrics is essentially (e.g., for algebraic varieties) always unobstructed. 

In Section \ref{G}, we begin by constructing a parameter family of  approximate cscK metrics on a crepant resolution. Then we perform the analysis to deform these pre-glued families of metrics to genuine cscK metrics. Due to the present of an abundant literature  on the subject, and to keep the presentation as clean as possible, we mainly emphasize the differences in the glueing process with previous similar constructions (e.g., the simplified presentation of \cite{Arezzo-Pacard} given in Chapter $8$ of \cite{Szekelyhidi}. Compare also \cite{Szekelyhihi2}).

Finally, in Section \ref{E}, we discuss some examples and possible generalizations of our main Theorem \ref{MT}.
We should mention here that, strictly speaking, we do not know at present any example of asymptotically conical cscK variety, but there are at least conjectural ones. In particular, we give examples of  singular $4$-folds satisfying all the needed requirements, \emph{except}, but crucially, the asymptotic expansion with respect to the natural model metrics near the singularities (Proposition \ref{Ex}).   Determining the precise asymptotic of a cscK metric, or even K\"ahler-Einstein, near a singularity turned out to be extremely delicate. Thanks to the work of \cite{EGZ} and \cite{BBEGZ}, we now have abundance of K\"ahler-Einstein metrics on $X^{reg}$, but the comparison between ``the metric and the analytic tangent cones'' has escaped our understanding up to now (it is tempting to think that the techniques developed by Donaldson and Sun in the very recent \cite{Donaldson-Sun}  can give light to the asymptotic decay metric problem, at least in specific cases). Nevertheless, we think that this ``theoretical'' glueing construction  carries some interest in its own (being a model for similar constructions which start from a genuine singular, i.e., with not orbifolds singularities, space)  and it may give additional motivation in the very important study of the metric behavior near the singularities of complex varieties equipped with ``canonical'' metrics.

\subsection*{Acknowledgements}

We would like to thanks Hans Joachim Hein for useful discussions related to this project. The authors have been partially supported by the FIRB Project ``\emph{Geometria Differenziale Complessa e Dinamica Olomorfa}''.  The second author has been supported also by the ANR grant ANR-10-BLAN 0105 and by the EPSRC grant EP/J002062/1.

\section{Main definitions}\label{MD}

We begin by giving the definition of our local models at the singularities, that is Calabi-Yau cones. For more information we refer for example to \cite{Conlon-Hein} and \cite{VCov}.

\begin{defi} We say that a normal (pointed) complex $n$-dimensional non-compact analytic variety $(Y,o)$ is a \emph{Calabi-Yau cone (CY cone)} if it exists a smooth K\"ahler form $\omega$ on $Y\setminus \{o\}$ such that:
\begin{itemize}
\item $Y^{\ast}:=Y\setminus \{o\}$ is isometric to a Riemannian cone $C(L)=(\R^+\times L, dr^2+r^2 g_L)$, with $L$ smooth manifold of real dimension $2n-1$ (the link), and $r$ distance function from the cone apex $\{o\}$, so that the K\"ahler form can be written as $\omega=i\partial\bar\partial r^2$.
\item there exists a nowhere vanishing section $\Omega$ of the canonical bundle $K_Y$, such that the Calabi-Yau equation
$$\omega^n=c(n)\,\Omega\wedge \bar\Omega$$
holds on $Y^\ast$.
\end{itemize}

\end{defi}

More generally, we can consider isolated singularities with $K_Y$ only $\Q$-Cartier, i.e., such that there exist an $m \in \N$ so that $K_Y^{m}$ is Cartier.

\begin{rmk}\label{scaling} The above definition implies that the real link $L$ is a Sasaki-Einstein manifold. If we denote by $J$ a complex structure tensor inducing the complex analytic structure on the cone, we have that the Reeb vector of the link is given by $\xi=J \left(r\frac{\partial}{\partial r}\right)$.

The Euler field $r\frac{\partial}{\partial r}$ is real holomorphic and induces a family of biholomorphisms (scalings) $\sigma_\lambda: Y\cong C(L)\longrightarrow Y$:
$$\sigma_{\lambda}: (r,p) \longmapsto (\lambda^{-1}r,p),$$
for all $\lambda \in \R^+$. Observe that $\sigma_{\lambda}^{\ast}(\omega)=\lambda^{-2}\omega$.

\end{rmk}

Let $X$ be a normal complex n-dimensional compact analytic variety with $\Q$-Cartier canonical divisor and with isolated singularities. Denote its singular set with $S:=\mbox{Sing}(X)=\{p_1,\dots,p_l\}$.  

\begin{defi} We say that a variety $X$ as above has \emph{singularities modelled on CY cones}, if the following holds:
\begin{itemize}
\item\label{s} for all $p_i\in S$ there exists a local biholomorphism $\varphi_i:V_i\subseteq Y_i \rightarrow X$, where $V_i$ is an open neighborhood of the apex $o_i$  on a CY cone $Y_i$ and $\varphi_i(o_i)=P_i$.
\end{itemize}
Let $\omega$ be a smooth K\"ahler form on $X^{reg}:= X \setminus S$.  We say that $\omega$ is \emph{conically singular of rate $\mu_i\in \R^+$ at $P_i \in S$} if it is possible to choose a local biholomorphism $\varphi_i$ as above which in addition  satisfies the following propriety: 
\begin{itemize}
\item  the $ i\partial\bar\partial$-equation $$\varphi_i^\ast (\omega)-\omega_{Y_i}= i\partial\bar\partial  f_i$$ holds on $V_i^\ast:= V_i\setminus \{o_i\}$, where $\omega_{Y_i}$ denotes a fixed Calabi-Yau cone metric on $Y_i$ and $f_i$ a function in $C^\infty(V_i^\ast;\R)$ which obeys the decay condition: for any $k\geq 0$
$$ | \nabla^k_{\omega_{Y_i}} f_i |_{\omega_{Y_i}}=\mathcal{O}(r_i^{\mu_i+2-k}),$$ with respect to the distance function from the cone apex $r_i(p)=d_{\omega_i}(p,o_i)\ll 1$. Here the covariant derivatives and the norms are computed using the CY metrics $\omega_{Y_i}$.
\end{itemize}
Finally, we say that $(X,\omega)$ is  a \emph{conically singular cscK variety} if, in addition to the previous proprieties, the K\"ahler form $\omega$  satisfies the constant scalar curvature (cscK) equation
$$Sc(\omega):=i\,tr_{\omega}\bar\partial\partial \log\, \omega^n=c\in\R$$ on the regular part $X^{reg}$.

\end{defi}

Some comments on the above definitions are needed. First, note that the definition of conically singular immediately implies that the  metric decay property  $$| \nabla^k_{g_{Y_i}}\left( \varphi_i^\ast (g)-g_{Y_i}\right)|_{g_{Y_i}}=\mathcal{O}(r_i^{\mu_i-k})$$  holds.

As we mentioned in the introduction, and even if not strictly necessary for our arguments, it is then natural to ask if the condition of having a  \emph{global} K\"ahler metrics with conical singularities on a variety $X$, with singularities modelled on CY cones, but not satisfying in general particular curvature condition, is always achievable or it imposes some extra conditions.  Based on a simple glueing argument, we show that such metrics can be constructed in many natural situations, including the case of algebraic varieties $X \subseteq \P^N$.

In order to state the Proposition, recall that a variety $X$ is called \emph{smoothly K\"ahler} if it admits a smooth K\"ahler metric $\omega$ (on its non-singular locus) which, locally near the singularities, is given by the restriction of a smooth K\"ahler form an embedding of a neighborhood of each point into a smooth K\"ahler manifold.  The prototypical, and most important, example of such metric is the case of the K\"ahler metric an algebraic variety $X \subseteq \P^N$ induced by the restriction of the Fubini-Study form on $\P^N$.

\begin{prop} \label{NoO} Let $(X,\omega)$ have singularities analytically, but not metrically, modeled on CY cones and assume the metric $\omega$ is smoothly K\"ahler. Then there exists a K\"ahler metric $\omega^{'}$ on $X^{reg}$ with metrically conical singularities which satisfies $[\omega^{'}]=[\omega]\in H^{1,1}(X^{reg};\R)$. 

In particular, this holds for algebraic varieties $X\subseteq \P^N$, having analytically CY cone singularities.\end{prop}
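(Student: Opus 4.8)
The plan is to interpolate, near each singular point $P_i$, between the given smoothly Kähler metric $\omega$ and a fixed CY cone metric $\omega_{Y_i}$, using a cutoff in the radial variable, and then to check that the resulting global form is still Kähler while lying in the same cohomology class. The starting observation is purely local: by definition of ``smoothly Kähler'', in the chart $\varphi_i : V_i \subseteq Y_i \to X$ the pullback $\varphi_i^\ast(\omega)$ is the restriction of a smooth Kähler form on an ambient smooth manifold, hence on a (possibly smaller) $V_i$ it can be written as $\varphi_i^\ast(\omega) = i\partial\bar\partial \psi_i$ for a smooth local Kähler potential $\psi_i$ defined \emph{including} the apex $o_i$; in particular $\psi_i$ and all its derivatives are bounded, and $\psi_i(p) = \psi_i(o_i) + O(r_i^2)$. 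On the other side, the CY cone metric is $\omega_{Y_i} = i\partial\bar\partial r_i^2$ with the potential $r_i^2$ vanishing at $o_i$ to order $2$. Thus, after subtracting the harmless constant $\psi_i(o_i)$, both potentials are $O(r_i^2)$ near the apex, and their difference $h_i := \psi_i - \psi_i(o_i) - r_i^2$ is a smooth function on $V_i^\ast$ with $|\nabla^k_{\omega_{Y_i}} h_i|_{\omega_{Y_i}} = O(r_i^{2-k})$.

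Next I would carry out the cutoff. Fix a smooth cutoff $\chi(r_i)$ equal to $1$ for $r_i \le \delta$ and equal to $0$ for $r_i \ge 2\delta$, and set on $V_i$
$$
\omega' := \omega + i\partial\bar\partial\bigl(\chi(r_i)\,(-h_i)\bigr)
= \omega + i\partial\bar\partial\bigl(\chi(r_i)\,(r_i^2 + \psi_i(o_i) - \psi_i)\bigr),
$$
and $\omega' := \omega$ outside $\bigcup_i V_i$. By construction $\varphi_i^\ast(\omega') = i\partial\bar\partial(r_i^2)= \omega_{Y_i}$ on $\{r_i \le \delta\}$, so $\omega'$ is \emph{exactly} the cone metric in the inner region, hence trivially has metrically conical singularities of any rate; and $\omega' = \omega$ for $r_i \ge 2\delta$, so the two forms glue smoothly on $X^{reg}$. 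Since the modification is $i\partial\bar\partial$ of a globally defined function on $X^{reg}$ (extended by $0$), we get $[\omega'] = [\omega] \in H^{1,1}(X^{reg};\R)$ immediately.

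The one genuinely substantive point — and the main obstacle — is positivity of $\omega'$ on the annular transition region $\{\delta \le r_i \le 2\delta\}$, where $\omega'$ is a nontrivial combination of the two metrics. Here one estimates
$$
i\partial\bar\partial\bigl(\chi(r_i)(-h_i)\bigr)
= \chi\, i\partial\bar\partial(-h_i) + (-h_i)\, i\partial\bar\partial\chi + i\,\partial\chi\wedge\bar\partial(-h_i) + i\,\partial(-h_i)\wedge\bar\partial\chi.
$$
Using the decay $|h_i| = O(\delta^2)$, $|\partial h_i| = O(\delta)$, $|\partial\bar\partial h_i| = O(1)$ together with $|\partial\chi| = O(\delta^{-1})$, $|\partial\bar\partial\chi| = O(\delta^{-2})$ (all measured in $\omega_{Y_i}$, hence comparable to the fixed metric $\omega$ up to bounded factors on this fixed annulus), every term on the right is $O(1) \cdot \omega_{Y_i}$, i.e. \emph{uniformly bounded}, but with a constant that does \emph{not} shrink with $\delta$. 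So a naive single cutoff is not quite enough. The standard fix, which I would use, is a two-step interpolation via potentials rather than via metrics: first replace $\omega$ on $V_i$ by $i\partial\bar\partial$ of the \emph{convex combination of potentials} $\chi\,r_i^2 + (1-\chi)(\psi_i - \psi_i(o_i))$ — a convex combination of two strictly plurisubharmonic functions is strictly plurisubharmonic, so the Hessian terms $\chi\,\omega_{Y_i} + (1-\chi)\varphi_i^\ast\omega$ are already positive; the only dangerous terms are the ones involving derivatives of $\chi$ hitting the \emph{difference} of potentials $r_i^2 - (\psi_i - \psi_i(o_i)) = h_i$, and these are controlled by $O(\delta^{2}\cdot\delta^{-2}) = O(1)$ — still not obviously small. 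To actually beat the positive Hessian term one shrinks the window: replace $\chi(r_i)$ by $\chi(r_i/\delta)$ supported in $[\delta, 2\delta]$ but note $|\partial\bar\partial(\chi(r_i/\delta))|\lesssim \delta^{-2}$ while the difference of potentials on that window is $O(\delta^2)$, giving the \emph{same} $O(1)$; the genuine gain comes instead from iterating over dyadic shells or, more simply, from first observing that since $\varphi_i^\ast\omega$ and $\omega_{Y_i}$ are \emph{uniformly equivalent} on any fixed punctured neighborhood of $o_i$ (both being smooth Kähler there, with $\varphi_i^\ast\omega$ extending across $o_i$), one may choose the background so that $\varphi_i^\ast\omega \ge \tfrac12\omega_{Y_i}$ on $\{r_i \le 2\delta\}$ and then absorb the (bounded) error terms into this definite gap by taking $\delta$ small — here $\delta$ small does help because on $\{\delta\le r_i\le 2\delta\}$ the potential difference scales like $r_i^2 = O(\delta^2)$ whereas the Hessian of $\chi(r_i/\delta)$ scales like $\delta^{-2}$, but the \emph{cross terms} scale like $(\partial h_i)(\partial\chi) = O(\delta)\cdot O(\delta^{-1}) = O(1)$ uniformly, and these cross terms, after Cauchy--Schwarz against the definite $\tfrac12\omega_{Y_i}$, contribute an error $\le C\,\omega_{Y_i}$ with $C$ independent of $\delta$ — so one must in fact use that $C$ can be made small by a more careful choice: normalize $\psi_i$ so that not only $\psi_i(o_i)$ but also the full $2$-jet of $\psi_i - r_i^2$ vanishes at $o_i$ (possible since $i\partial\bar\partial\psi_i$ and $i\partial\bar\partial r_i^2$ agree to leading order at $o_i$ only up to a constant linear change — which can be arranged by a linear change of holomorphic coordinates), whence $h_i = O(r_i^3)$, $\partial h_i = O(r_i^2)$, and now the cross terms are $O(\delta^2)\cdot O(\delta^{-1}) = O(\delta) \to 0$. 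With that normalization all error terms are $o(1)\cdot\omega_{Y_i}$, the inequality $\omega' \ge \tfrac14\omega_{Y_i} > 0$ holds on the annulus for $\delta$ small, and $\omega'$ is the desired global Kähler metric with metrically conical singularities in the class $[\omega]$. The final assertion for $X\subseteq\P^N$ is then immediate: the Fubini--Study metric restricts to a smoothly Kähler metric in the above sense, and if $X$ has analytically CY cone singularities the hypothesis of the Proposition is met.
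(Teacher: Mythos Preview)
Your argument has a genuine gap at the positivity step, and it originates in an implicit assumption that is valid only for orbifold cones. You claim one can ``normalize $\psi_i$ so that the full $2$-jet of $\psi_i - r_i^2$ vanishes at $o_i$'', via a linear change of holomorphic coordinates. But $r_i^2$ is the \emph{intrinsic} Calabi--Yau cone potential, and for a non-orbifold CY cone it is \emph{not} the restriction to $Y_i$ of any smooth function on the ambient $\C^N$: already for the quadric cone $\sum z_j^2=0\subset\C^4$ the Ricci-flat cone potential is a fractional power of $\sum|z_j|^2$. Hence $i\partial\bar\partial r_i^2$ and $i\partial\bar\partial\psi_i$ (the latter being the restriction of a smooth ambient form) can never be made to agree at the apex, no matter how you choose coordinates or the smooth potential $\psi_i$; the difference is a fixed nonzero $(1,1)$-form and your cross terms stay $O(1)$, not $o(1)$. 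The same phenomenon invalidates the earlier claim that $\varphi_i^\ast\omega$ and $\omega_{Y_i}$ are uniformly equivalent near $o_i$: the ambient-restricted metric and the CY cone metric scale with \emph{different} homogeneities under the Reeb flow whenever the cone is not flat, so their ratio degenerates as $r_i\to 0$. In short, your single-cutoff interpolation works for $\C^n/\Gamma$ but breaks down precisely in the new situation the Proposition is meant to cover.

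The paper's proof sidesteps any direct comparison of the two metrics. It proceeds in three moves: first, using that $\omega$ is the restriction of a smooth ambient form, cut off the higher-order part of the ambient potential so that $\omega$ becomes the restriction of the \emph{flat} $\C^N$-metric near the singularity (this step uses only ambient smoothness, so the $O(|z|^4)$ error really is small); second, replace the flat potential $|z|^2$ by a convex reparametrisation $\tau_{\delta_2}(|z|^2)$ to obtain a global closed $(1,1)$-form $\tilde\eta\in[\omega]$ which is \emph{non-negative} everywhere and identically zero on a neighbourhood of the apex; third, add $\epsilon^2$ times a cut-off of the cone potential $r_i^2$. Positivity is then checked region by region: near the apex $\tilde\eta=0$ and $\omega'=\epsilon^2\omega_{Y_i}>0$; on the collar where the cone potential is cut off one has $\tilde\eta=\omega>0$, which dominates the bounded error once $\epsilon$ is small; elsewhere $\omega'=\omega$. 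The extra small parameter $\epsilon$ is exactly the missing ingredient that decouples the two incompatible metrics.
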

\begin{proof}
Assume for simplicity that the singular set consists of only one point. By hypothesis we have a biholomorphism $\phi: V \subseteq Y \rightarrow U \subseteq X$ and an embedding $i:U \hookrightarrow Z$, where $(Z,\eta)$ is a smooth K\"ahler manifold and $\omega_{|U^\ast}=i^\ast \eta$. By making an analytic change of coordinate in $Z$,  we may assume without lost in the generality that $Z$ is an open ball in $\C^N$ centred at the origin, $i \circ \phi (o)= 0$  and $\eta= i\partial\bar\partial \left(|z|^2 +\mathcal{O}(|z|^4) \right)$.

Let $\chi:\R \rightarrow \R$ be a standard cut-off function equal to zero for $t\leq 1$ and identically one for $t\geq 2$. Define for $\delta_1 << 1$ the function   $ \chi_{\delta_1}(z):= \chi \left(\frac{|z|}{\delta_1}\right)$. Then, for $\delta_1$ sufficiently small, the closed $(1,1)$-form $\eta_{\delta_1}:= i\partial\bar\partial\left(|z|^2+ \chi_{\delta_1}(z) \mathcal{O}(|z|^4) \right)$ is K\"ahler and is equal to the flat metric on a possibly arbitrary small neighborhood of $\C^n$. Observe that for $|z|\geq 2 \delta_1$ the restriction of this metric agrees with $\omega$. Hence, abusing of notation, we may think of  $\eta_{\delta_1}$ to define a K\"ahler metric on all $X^{reg}$.

For $\delta_2 << 1$ consider a  function $\tau_{\delta_2}:\R \rightarrow \R$ identically equal to zero for $t\leq \delta_2^2$, equal to the identity for $t\geq 4 \delta_2^2$  and which satisfies $\tau^{'}_{\delta_2}(t),\tau^{''}_{\delta_2}(t)\geq 0$ for all $t$. Then take $2\delta_2 < \delta_1$ and define a closed $(1,1)$-form on $X^{reg}$
$$
 \tilde{\eta} = \left\{
  \begin{array}{l l}
  i\partial\bar\partial(\tau_{\delta_2}(|z|^2))_{|U}  & \quad \text{on }  |z|\leq 2\delta_2 \\
 \eta_{\delta_1} & \quad \text{on }  |z|> 2\delta_2

  \end{array} \right.$$
It is easy to check that $i\partial\bar\partial(\tau_{\delta_2}(|z|^2))(\xi)=\tau^{''}_{\delta_2}|(\xi,z)|^2+\tau^{'}_{\delta_2}|\xi|^2\geq 0$, where $\xi \in \C^n$ and the bracket is the usual hermitian product. Hence $\tilde{\eta}$ defines a non-negative closed form, identically zero on a small neighborhood of the singularity, such that $[\tilde{\eta}]=[\omega]$.

Now consider the cone CY metric $i\partial\bar\partial r^2$ on the cone Y. For $\delta_3<<1$ define a closed $(1,1)$ form on $U$ to be 
$\phi_\ast\left(i\partial\bar\partial((1-\chi(r/\delta_3)r^2)\right)$, and observe that on the compact region $\{\delta_3\leq r\leq 2\delta_3\}$ we do not have control on the sign of $\omega$.
Since all $\delta_1,\delta_2,\delta_3$ can be taken arbitrary small, we may assume $\phi(\{r\leq \delta_3\})\supseteq X\cap \{|z|\leq 2\delta_1\}$.

Then, abusing of notation we define on $X^{reg}$ the form,

$$\omega^{'}:=\epsilon^2 \phi_\ast\left(i\partial\bar\partial((1-\chi(r/\delta_3)r^2)\right) + \tilde{\eta}.$$
For $\epsilon<<1 $ the above form is positive and satisfies the desired properties. In fact, on   $\phi(\{r\leq \delta_3\})$ we have that $\omega^{'}$ is exactly conical near the singularity (take the scaling biholomorphism $\sigma_{\epsilon^{-1}}$)) and $\omega^{'}\geq \epsilon^2 i\partial\bar\partial r^2>0$. On the (compact) collar region $\phi\left(\{\delta_3\leq r\leq 2\delta_3\}\right)$ we can take $\epsilon$ so small that the strict positivity of $\tilde{\eta}=\omega$ dominates the possible negative contribution coming from the  cut-off. On the remaining region $\omega^{'}$ agrees with the starting form $\omega$. Obviously $\omega^{'}$ is in the same cohomology class of $\omega$. 
\end{proof}

The above Proposition \ref{NoO} naturally gives always a background conical metric in the relevant cohomology class. Thus it is tempting to try to prove \emph{``conical Calabi-Yau Theorems''} by starting with such background metrics and by deforming them via the classical continuity path to a K\"ahler-Einstein one  (for non-positive first Chern class) while keeping some control on the asymptotic behavior near the singularities. However, as it is well-known to experts in the field, there are severe difficulties in carrying out such program (for example the usual proof of the Yau's $C^2$-estimates doesn't generalize since the holomorphic bisectional curvature of any (non-flat) cone blows-up at the singularity, simply for scaling reasons). On the other hand, by the work of \cite{EGZ}, one can still equip a klt variety $X$ with $c_1(X)\leq0$ with some weak KE metric (which are, in particular, smooth on $X^{reg}$ and ``canonical'', i.e., unique). However, in this case any information about asymptotic expansions of the metric near 
the singularities are lost (it is only known that such metric admits an $L^\infty$ local potential).

Finally, we recall the definition of the local asymptotically conical CY models we will need to resolve the singularities.

\begin{defi}  We say that a CY cone $(Y,\omega)$ admits an \emph{asymptotically conical (AC) CY resolution $(\hat{Y},\hat{\omega})$ of rate $\nu\in \R$ and  $ i\partial\bar\partial$-exact at infinity}, if: 
\begin{itemize}
\item $\hat{Y}$ is a smooth manifold with trivial $m$-pluricanonical bundle, and there exists an holomorphic map (crepant resolution)  $\pi:\hat{Y}\rightarrow Y$ which is a biholomorphism away from $\pi^{-1}(o)$;
\item  the $ i\partial\bar\partial$-equation $$\pi_\ast (\hat{\omega})-\omega= i\partial\bar\partial \, g$$ holds on $Y\setminus B(R,o)$, where $B(R,o)$ is a ball on the CY cone of sufficiently big radius $R$ and $g$ a function in $C^\infty(Y\setminus B(R,o);\R)$ which obeys the decay condition: for any $k\geq 0$
$$ | \nabla^k_{\omega}\, g |_{\omega}=\mathcal{O}(r^{\nu+2-k})$$ with respect to the distance function from the cone apex $r(p)=d_{\omega}(p,o)\gg 1$. Here the covariant derivatives and the norms are computed using the CY metric cone metric. 
\end{itemize}
\end{defi}

While the above definition leaves the rate $\nu$ free, a result of Van Coevering shows that $\nu$ is \emph{always} equal to $-2n$ for compactly supported K\"ahler classes. In complex dimension $2$, these type of model are exactly given by the Kronheimer's ALE gravitational instantons \cite{Kronheimer}, i.e., asymptotically locally euclidean CY metrics on the minimal resolution of an isolated  quotient singularity of type $\C^2/\Gamma$, with $\Gamma\subseteq SU(2)$ finite, acting freely away from the origin. 
In higher dimension, the simplest source of examples comes from the well-known \emph{Calabi's Ansatz} \cite{Calabi}, which we now recall briefly (it will be applied later in the Proposition \ref{Ex}). Let $M^{n-1}$ be a $n-1$ dimensional K\"ahler-Einstein Fano manifold, and let $i\in \N$ the maximal number such that $ K_M=H^i \in Pic(M)$, for some (ample) line bundle $H$. Such $i$ is called the \emph{index} of the Fano manifold, and it is well-known to be always less than or equal to $n$ (which happens only for $\P^{n-1}$). Then, it is known that the $\Z_i$ quotient of the total space of the canonical bundle minus the zero section is a CY metric cone which admits a natural crepant resolution carrying an almost-explicit AC metric of rate $-2n$ and $i\partial\bar\partial$-exact at infinity.
Thus, a useful source of examples of  AC CY metrics with the desired properties is given by looking at some hypersurface singularities of the appropriate degree (we will make use of such examples later in Section \ref{E}): 
\begin{lem}\label{CA} Let $M^{n}\subseteq \C^{n+1}$, $n \geq 3$ be the ``generic'' smooth away from the origin hypersurface given by an homogeneous polynomial of degree $n$. Then $M^{n}$ admits a CY cone metric and $Bl_0(M)\cong Tot \left(\mathcal{O}_{\bar{M}}(-1)\right)$, where $ \bar{M}\subseteq \P^n$ is the natural complex link, admits an AC CY metric $i\partial\bar\partial$-exact at infinity.
\end{lem}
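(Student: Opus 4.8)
The plan is to realise $M$ as the affine cone over its projectivisation $\bar M\subseteq\P^n$ and then invoke the Calabi ansatz recalled before the lemma, with $\bar M$ playing the role of the base Fano manifold; the only genuinely analytic ingredient required is a K\"ahler--Einstein metric on $\bar M$.

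First I would settle the algebraic geometry. Write $M=\{f=0\}\subseteq\C^{n+1}$ with $f$ homogeneous of degree $n$; for generic $f$ the projective hypersurface $\bar M:=\{f=0\}\subseteq\P^n$ is smooth, so $M\setminus\{0\}$ is a $\C^\ast$-bundle over $\bar M$ and $M$ is a normal affine cone with an isolated singularity at $0$ (normality holds by Serre's criterion, since $M$ is a hypersurface, hence Cohen--Macaulay, with singular locus $\{0\}$ of codimension $n\geq 3$). By adjunction $K_{\bar M}=\big(K_{\P^n}\otimes\mathcal{O}_{\P^n}(n)\big)|_{\bar M}=\mathcal{O}_{\bar M}(-1)$, so $\bar M$ is Fano with $-K_{\bar M}=H:=\mathcal{O}_{\bar M}(1)$; since $n\geq 3$, $H$ generates $Pic(\bar M)$ (Lefschetz for $n\geq 4$, and $-K$ is primitive on a cubic surface for $n=3$), so $\bar M$ has index $1$. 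As a smooth hypersurface in $\P^n$ is projectively normal, $\bigoplus_{k\geq 0}H^0(\bar M,H^k)=\C[z_0,\dots,z_n]/(f)$, hence $M$ is precisely the affine cone over $(\bar M,H)$. Blowing up the vertex therefore yields $Bl_0(M)=Tot(H^{-1})=Tot(\mathcal{O}_{\bar M}(-1))=Tot(K_{\bar M})$, a manifold with nowhere-vanishing holomorphic $n$-form, so that $\pi\colon Bl_0(M)\to M$ is a crepant resolution with exceptional divisor the zero section $\bar M$. This is the asserted identification $Bl_0(M)\cong Tot(\mathcal{O}_{\bar M}(-1))$.

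The essential analytic input is that, for generic $f$, the Fano manifold $\bar M$ carries a K\"ahler--Einstein metric. I would obtain this from the K-stability of a generic hypersurface of degree $n$ in $\P^n$ --- equivalently, via Tian's criterion, from the known lower bounds on its $\alpha$-invariant --- combined with the solution of the Yau--Tian--Donaldson conjecture; for $n=3$ this is the classical statement that a generic cubic surface is K\"ahler--Einstein. I expect this step to be the only real obstacle: the glueing and Calabi-ansatz parts are soft, whereas establishing K-stability is delicate and is genuinely known only for generic members, which is exactly why the hypersurface is assumed generic.

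Granting the K\"ahler--Einstein metric on $\bar M$, the conclusion is now just the Calabi ansatz \cite{Calabi} applied to the index-$1$ Fano $(\bar M,H)$, as recalled before the statement. The unit circle subbundle $L\subseteq K_{\bar M}$ is then Sasaki--Einstein, so its Riemannian metric cone --- which is $M$ with its affine complex structure and K\"ahler form $i\partial\bar\partial r^2$ --- is Ricci-flat K\"ahler; thus $M$ admits a CY cone metric. On $Bl_0(M)=Tot(K_{\bar M})$, Calabi's construction produces a complete Ricci-flat K\"ahler metric $\hat\omega$ by solving a first-order ODE for a radial potential over the fibres; away from the zero section one has $\hat\omega=i\partial\bar\partial\Phi$ while the cone metric is $i\partial\bar\partial\Phi_0$ with $\Phi-\Phi_0$ decaying polynomially, so $\hat\omega$ is asymptotically conical and $i\partial\bar\partial$-exact at infinity. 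Finally, by Van Coevering's theorem \cite{VCov} quoted above, the decay rate of such an AC CY metric in a compactly supported K\"ahler class is automatically $-2n$, giving the remaining assertion.
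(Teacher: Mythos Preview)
Your proposal is correct and follows the same overall architecture as the paper: verify by adjunction and Lefschetz that $\bar M$ is a Fano of index $1$ with $K_{\bar M}=\mathcal{O}_{\bar M}(-1)$, identify $Bl_0(M)$ with $Tot(K_{\bar M})$, secure a K\"ahler--Einstein metric on the generic $\bar M$, and then feed this into the Calabi ansatz already recalled before the lemma. The only noteworthy difference is in the KE step: the paper argues by first citing Tian \cite{Tian} for the existence of a KE metric on the \emph{Fermat} hypersurface and then invoking the openness of the KE condition in families with discrete automorphisms (Donaldson \cite{Donaldson}, Odaka \cite{Odaka}) to pass to the generic member, whereas you appeal more abstractly to K-stability of a generic hypersurface together with the Yau--Tian--Donaldson correspondence (or, alternatively, to $\alpha$-invariant bounds). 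The paper's route is slightly more self-contained, since it names the specific member on which KE is known and the precise deformation argument, while yours leans on heavier later machinery without pinning down why the generic hypersurface is K-stable; but both are valid and land in the same place.
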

\begin{proof}
 It follows immediately by adjunction and Lefschetz's Theorem that $K_{\bar{M}}=\mathcal{O}(-1)_{|\bar{M}}$ and $Pic({\bar{M}})=\Z   K_{\bar{M}}$ if $n>3$ and it is classical that for cubic surfaces the canonical bundle is indivisible. It is known that Fermat type hypersufaces admits KE metric \cite{Tian} and then, since $Aut(\bar{M})$ is finite, the set of smooth hypersurfaces admiting KE metric is (at least) a not-empty Zariski open set \cite{Donaldson}, \cite{Odaka}.
\end{proof}

For more examples and discussions about these types of AC CY manifolds one can compare \cite{VCov}. Here we point out that there exists many examples of AC CY manifolds which \emph{are not} $i\partial \bar \partial$-exact at infinity (e.g. when the K\"ahler metric does not represent a compactly supported cohomology class). The simplest situation when this issue happens is given by the small resolutions of the ordinary double point $x^2+y^2+z^2+t^2=0$ (see discussion in Section \ref{E}).

\section{The glueing construction} \label{G}

In this section we prove our main Theorem \ref{MT}. The method we use is based on the \emph{smooth} (or orbifold) cscK blow-up construction given in  \cite{Arezzo-Pacard}, but following the simplified presentation given by Szekelyhidi in \cite{Szekelyhidi}. We begin with a ``pre-glueing'' construction.

\subsection{Pre-glueing}

On the complex manifold $\hat{X}_{{\lambda}}:= X \sharp_{\lambda}\sharp \hat{Y}_1  \dots \sharp_{\lambda}  \hat{Y}_l$, with ${\lambda} \in (0,1)$, naturally obtained by glueing $l$ asymptotically Calabi-Yau resolutions $\hat{Y}_i$ of order $-2n$ at the removed singularities of $X$, we define the following smooth real closed $(1,1)$-form: 

\begin{equation}\label{PG}
  \omega_{\lambda} = \left\{
  \begin{array}{l l}
    \omega & \quad \text{on } X\setminus B(2r_i(\lambda))\\
    i\partial\bar\partial \left(r_i^2+ \chi_i f_i+\lambda^2(1-\chi_i) g \circ \sigma_{\lambda}^i\right) & \quad \text{on }  B(2r_i(\lambda)) \setminus B(r_i(\lambda)) \\
    \lambda^2 \hat{\omega}_{Y_i} &  \quad \text{on }  \pi_i^{-1}(\sigma_{\lambda}(B(r_i(\lambda)))\subseteq \hat{Y}_i.
  \end{array} \right.
\end{equation}
where $r_i$ denotes the distance from the apex of the cone with respect to the cone metrics $i\partial\bar\partial r_i^2$ and $\hat{\omega}_{Y_i}$ is an AC Calabi-Yau metric of rate $\mu_i$ (we have omitted the local charts $\varphi_i$ in the above definition, for clarity of the notation), $\chi_i$ an usual cut-off function defined as $\chi_i(p):=\chi(r_i(p)/r_i(\lambda))$, with $\chi(t)$ a smooth increasing real valued function identically equal to zero for $t\leq1$ and identically equal to one for $t\geq 2$, and finally $\sigma_{\lambda}$ the scaling biholomorphisms induced by the Euler field as in Remark \ref{scaling}.

An elementary computation then shows the following (the actual choices for the cutting regions are important for later estimates):

\begin{lem} Let  $\omega_{{\lambda}}$ be  the $(1,1)$ form on $\hat{X}_{{\lambda}}$ as defined in  \ref{PG} above and let $$r_i(\lambda)= \lambda^{\frac{2n}{2n+\mu_i}}.$$ Then there exists $\lambda_0>0$ such that for all $\lambda \leq \lambda_0$ the forms $\omega_{{\lambda}}$ are positive definite (hence they define  K\"ahler metrics).
\end{lem}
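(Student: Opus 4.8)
The plan is to verify positivity region by region, using the scaling behaviour of the cone metric to control the "neck" piece. On $X \setminus B(2r_i(\lambda))$ the form is just $\omega$, which is K\"ahler by hypothesis, and on $\pi_i^{-1}(\sigma_\lambda(B(r_i(\lambda))))$ it is $\lambda^2 \hat\omega_{Y_i}$, which is K\"ahler because $\hat\omega_{Y_i}$ is. So the only work is on the annular neck $B(2r_i(\lambda)) \setminus B(r_i(\lambda))$, where we must show $i\partial\bar\partial(r_i^2 + \chi_i f_i + \lambda^2(1-\chi_i)\, g\circ\sigma_\lambda^i) > 0$ for $\lambda$ small. The natural device is to pull back by the scaling $\sigma_{r_i(\lambda)}$ so that the annulus $\{r_i(\lambda) \le r_i \le 2r_i(\lambda)\}$ becomes the fixed annulus $\{1 \le r_i \le 2\}$; since $\sigma_\mu^* \omega_{Y_i} = \mu^{-2}\omega_{Y_i}$, after rescaling by $r_i(\lambda)^{-2}$ the leading term $i\partial\bar\partial r_i^2$ becomes the \emph{fixed} cone metric $\omega_{Y_i}$ on $\{1 \le r_i \le 2\}$, which is uniformly positive there, with a definite lower bound on its eigenvalues independent of $\lambda$.

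Next I would show the two remaining terms are perturbations that go to zero in $C^2$ (equivalently, as $(1,1)$-forms in the $\omega_{Y_i}$-norm) on that fixed annulus as $\lambda \to 0$. For the $\chi_i f_i$ term: the decay hypothesis on $f_i$ gives $|\nabla^k_{\omega_{Y_i}} f_i|_{\omega_{Y_i}} = \mathcal{O}(r_i^{\mu_i + 2 - k})$, and since $\chi_i$ is a fixed cutoff of the rescaled radial variable its derivatives are $\mathcal{O}(1)$ in the rescaled picture, so after rescaling by $r_i(\lambda)^{-2}$ the contribution $r_i(\lambda)^{-2} i\partial\bar\partial(\chi_i f_i)$ is $\mathcal{O}(r_i(\lambda)^{\mu_i})$ in the $\omega_{Y_i}$-norm on $\{1 \le r_i \le 2\}$, hence tends to $0$ since $\mu_i > 0$. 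For the $\lambda^2(1-\chi_i)\, g\circ\sigma_\lambda^i$ term: the decay hypothesis on $g$ for the resolution, $|\nabla^k_{\omega_{Y_i}} g|_{\omega_{Y_i}} = \mathcal{O}(r_i^{-2n + 2 - k})$, controls $\sigma_\lambda^* g$; writing everything in terms of the rescaled coordinate and tracking the factors $\lambda^2$, $\sigma_\lambda^*$, and the final rescaling $r_i(\lambda)^{-2}$, the point of the particular choice $r_i(\lambda) = \lambda^{2n/(2n+\mu_i)}$ is exactly to balance the powers of $\lambda$: one checks that $\lambda^2 / r_i(\lambda)^{2n+2} \cdot r_i(\lambda)^{2n} = \lambda^2 r_i(\lambda)^{-2} = \lambda^{2\mu_i/(2n+\mu_i)} \to 0$ (and similarly for the derivative terms coming from $1-\chi_i$), so this term is also an $o(1)$ perturbation of $\omega_{Y_i}$ in the $C^0$-norm of $(1,1)$-forms on the fixed annulus.

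Combining: on the rescaled annulus $\{1 \le r_i \le 2\}$ the rescaled form equals $\omega_{Y_i} + E_\lambda$ where $\|E_\lambda\|_{\omega_{Y_i}} \to 0$ uniformly as $\lambda \to 0$, and $\omega_{Y_i}$ has eigenvalues bounded below by a positive constant $c_0 = c_0(Y_i)$ there; hence for $\lambda$ small enough (say $\lambda \le \lambda_0^{(i)}$) the rescaled form — and therefore $\omega_\lambda$ itself, positivity being scale-invariant — is positive definite on the neck. Taking $\lambda_0 = \min_i \lambda_0^{(i)}$ and noting the matching of the forms across $\{r_i = r_i(\lambda)\}$ and $\{r_i = 2r_i(\lambda)\}$ (where $\chi_i$ is $0$ or $1$) is built into the definition in \eqref{PG}, we conclude $\omega_\lambda$ is a genuine K\"ahler metric on $\hat X_\lambda$ for all $\lambda \le \lambda_0$. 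The main obstacle — really the only nontrivial point — is the bookkeeping of the scaling exponents to see that the choice $r_i(\lambda) = \lambda^{2n/(2n+\mu_i)}$ makes \emph{both} error terms subleading simultaneously on the neck; everything else is routine once the rescaling normalization is set up.
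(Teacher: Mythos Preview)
Your argument is correct and follows essentially the same route as the paper's proof: both reduce to showing that on the neck $|\omega_\lambda - \omega_{Y_i}|_{\omega_{Y_i}} = o(1)$, with the choice $r_i(\lambda) = \lambda^{2n/(2n+\mu_i)}$ precisely balancing the two perturbation terms so that each is of order $r_i(\lambda)^{\mu_i}$. (Your displayed exponent $\lambda^{2\mu_i/(2n+\mu_i)}$ for the $g$-term appears to be a bookkeeping slip --- the correct size is $\lambda^{2n\mu_i/(2n+\mu_i)} = r_i(\lambda)^{\mu_i}$, matching the $f_i$-term --- but since this still tends to $0$ the conclusion is unaffected.)
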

\begin{proof}
For simplicity, assume that we only have one singular point. Away from the glueing region the statement is obvious. Moreover, thanks to the $-2n$ asympotic of the AC Calabi-Yau metric, it is immediate to check that, if the glueing region is choosen depending on $\lambda$ as $r(\lambda)=\lambda^\frac{2n}{2n+\mu}$, $\lambda^2 (g \circ \sigma_{\lambda})\sim r^{2+\mu}$ on the strip $[r(\lambda),2r(\lambda)]$ (this is the main reason for the specific choice of the region). Since the cut-off function scales homogeneously with the derivatives, on the glueing region, we find $|\omega_\lambda-\omega_Y|_{\omega_Y}=\mathcal{O}(r^\mu(\lambda))$. Hence, since $\mu>0$, the desired positivity is easily verified for $\lambda$ small enough.
\end{proof}

\begin{rmk}
\begin{itemize}
  \item  The fact that the (analytic) resolutions $\hat{X}_{{\lambda}}$ are K\"ahler is a consequence of the request of the cohomological triviality of the AC metrics $\hat{\omega}_{Y_i}$ at infinity. Without this assumption, k\"ahlerianity  of the (analytic) resolutions may fail due to some  \emph{global} phenomena, as the well-known case of some small resolutions of nodal threefolds. 
  \item  Note that the underlying holomorphic type of  the manifolds $\hat{X}_{{\lambda}}$ is the same. This simply follows by the natural scaling using the holomorphic action of the Euler field near the singularity and at the infinity of the non-compact model $\hat{Y}$ (note that, by Riemann's extension, the Euler field lifts to an holomorphic vector field on the resolution $\hat{Y}$). Thus we may sometime address to $\hat{X}_{{\lambda}}$ as to ``the'' complex manifold $\hat{X}$.
\end{itemize}
\end{rmk}

With the next Lemma we summarize the main ``cohomological'' properties of the pre-glued K\"ahler metrics. To state the Lemma we need to introduce some notation.
By applying the standard Mayer-Vietoris sequence in compact supported cohomology on $\hat{X}_{{\lambda}}$ with open sets $U\cong X^{reg}$, $V\cong \sqcup \hat{Y}_i$ and $U\cup V\cong \sqcup \left(L_i \times(0,1)\right)$, where the natural identifications with $\varphi_i$, $\pi_i$ and $\sigma_{\lambda}$ to be understood, we get a map
 $$h:H_c^2(U;\R)\oplus H_c^2(V;\R)\longrightarrow H^2_c(\hat{X}_{{\lambda}};\R)\cong H^2(\hat{X}_{{\lambda}};\R).$$
 Then
\begin{lem} \label{com}
The cohomolgy class of the preglued metric $\omega_{{\lambda}}$ is equal to 
 $$ [\omega_{{\lambda}}] = h\left(\sum_i \lambda^2 [\omega_{\hat{Y}_i}]_0 \oplus [\omega]_0\right) \in H^{1,1}(\hat{X}_{{\lambda}};\R)$$
 where with $[-]_0$ we denote the well-defined pre-image of the cohomology class in the compactly supported cohomology of $\hat{Y}_i$ and $X^{reg}$ respectively and with $h$ the natural map in cohomology coming from the Mayer-Vietoris sequence above recalled.

 Moreover, if there exists a cscK metric  $\omega_{{\lambda}}^\ast$ in $[ \omega_{{\lambda}} ]$, the value of the scalar curvature is given by:
$$ Sc( \omega_{{\lambda}}^\ast)=Sc(\omega)\frac{Vol(\omega)}{Vol( \omega_{{\lambda}})}=\frac{Sc(\omega)}{1+<\underline{c},{\lambda}^{2n}>},$$
for  ${\lambda}\leq \lambda_0$, where the signs in the vector $\underline{c}$ depends only on cohomological properties of the resolutions $\hat{Y}_i$ (compare the proof). In particular, the sign of the eventual constant scalar curvature in the class $[ \omega_{{\lambda}} ]$ must be equal to the sign of the scalar curvature of the original metric $\omega$.
\end{lem}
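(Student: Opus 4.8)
The plan is to establish the two assertions of Lemma \ref{com} in sequence: first the cohomological identity for $[\omega_{\lambda}]$ via Mayer--Vietoris, and then the scalar curvature formula as a consequence, using only the elementary integral identity $\int_{\hat X_\lambda} Sc(\omega^\ast_\lambda)\,\omega^{\ast n}_\lambda = c_n \int_{\hat X_\lambda} c_1(\hat X_\lambda)\wedge [\omega^\ast_\lambda]^{n-1}$ together with the fact that $\hat X_\lambda$ is \emph{crepant}, so that $c_1(\hat X_\lambda) = \pi^\ast c_1(X)$ pulls back (in an appropriate cohomological sense) the canonical class of $X$.

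\emph{Step 1: the cohomology class.} First I would recall that the preglued form $\omega_\lambda$ from \eqref{PG} is, by construction, equal to $\omega$ on the large compact piece $U \cong X^{reg}$ (outside the $B(2r_i(\lambda))$) and equal to $\lambda^2 \hat\omega_{Y_i}$ on the resolution pieces $V \cong \sqcup \hat Y_i$, being interpolated only on the thin annular overlap $U\cap V \cong \sqcup (L_i\times(0,1))$. Since $\omega$ is conically singular of rate $\mu_i>0$ and $\hat\omega_{Y_i}$ is AC of rate $-2n$ with the $i\partial\bar\partial$-exactness at infinity, both $[\omega]$ and $[\hat\omega_{Y_i}]$ admit well-defined lifts $[\omega]_0 \in H^2_c(U;\R)$ and $[\hat\omega_{Y_i}]_0 \in H^2_c(\hat Y_i;\R)$: this is precisely because the potentials $f_i$ and $g$ exhibit the restrictions to the overlap as $i\partial\bar\partial$ of globally-defined functions there (decaying at the relevant end), so each form, cut off appropriately, represents a compactly supported class, and the two choices of cutoff differ by an exact compactly supported form. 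The exactness at infinity of $\hat\omega_{Y_i}$ is what guarantees $[\hat\omega_{Y_i}]_0$ exists (its image in ordinary $H^2$ may even be zero). Then $\omega_\lambda$, as a closed form on $\hat X_\lambda$ which on $U$ agrees with a representative of $[\omega]_0$ and on $V$ with a representative of $\sum_i \lambda^2[\hat\omega_{Y_i}]_0$ up to a compactly supported exact correction on the overlap, represents exactly $h\big(\sum_i \lambda^2[\omega_{\hat Y_i}]_0 \oplus [\omega]_0\big)$ by the definition of the Mayer--Vietoris map $h$. Since $H^2_c(\hat X_\lambda;\R)\cong H^2(\hat X_\lambda;\R)$ by compactness, this is the asserted class, and it is of type $(1,1)$ since $\omega_\lambda$ is.

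\emph{Step 2: the volume.} Integrating, $\mathrm{Vol}(\omega_\lambda) = \tfrac{1}{n!}\int_{\hat X_\lambda}\omega_\lambda^n = \tfrac1{n!}\int [\omega_\lambda]^n$, and by Step 1 together with the fact that the cup product on $H^\ast_c$ is computed by the pairing $H^2_c\times H^{2n-2}\to H^{2n}_c \cong \R$, the class $[\omega_\lambda]^n$ decomposes into a piece coming from $X^{reg}$ — which equals $n!\,\mathrm{Vol}(\omega)$, since removing a neighborhood of the $p_i$ and gluing in resolutions does not change the integral $\int [\omega]^n$ over the part where the form is unchanged, and the resolution contributes volume $O(\lambda^{2n})$ — plus cross terms and the pure $\lambda^2\hat\omega_{Y_i}$ terms, all of which scale as powers $\lambda^{2k}$, $k\geq 1$; in fact by the AC rate $-2n$ one checks all these collect into $\lambda^{2n}$ times topological intersection numbers of the $\hat Y_i$ (the rate $-2n$ is exactly borderline-integrable, so $\int \hat\omega_{Y_i}^n$ is finite and gives a number). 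Hence $\mathrm{Vol}(\omega_\lambda) = \mathrm{Vol}(\omega)\big(1 + \langle \underline c,\lambda^{2n}\rangle\big)$, with $\underline c$ a vector whose entries are (signed) rational intersection numbers on the resolutions $\hat Y_i$; their signs depend only on the cohomological type of the crepant resolution, not on $\lambda$ or on $\omega$.

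\emph{Step 3: from volume to scalar curvature.} If a cscK metric $\omega^\ast_\lambda$ exists in $[\omega_\lambda]$, then $Sc(\omega^\ast_\lambda)$ is the constant $\bar S_\lambda := c_n\,\frac{\int c_1(\hat X_\lambda)\wedge[\omega_\lambda]^{n-1}}{\int [\omega_\lambda]^n}$, a purely cohomological quantity. Because $\pi\colon \hat X_\lambda \to X$ is crepant, $c_1(\hat X_\lambda)$ is $\pi^\ast$ of the anticanonical class, which over $X^{reg}$ agrees with $c_1$ of the part where $\omega^\ast_\lambda$ is cohomologous to $\omega$; the analogous numerator integral over $X$ in the class $[\omega]$ gives $c_n\int c_1(X)\wedge[\omega]^{n-1} = \int_{X^{reg}} Sc(\omega)\,\omega^n = Sc(\omega)\,n!\,\mathrm{Vol}(\omega)$. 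The crepancy kills exactly the contributions from the exceptional loci of the resolutions at the level of $c_1$, so the numerator for $\omega_\lambda$ equals the numerator for $\omega$ up to terms that also reorganize into $\lambda^{2n}$-corrections; the cleanest way to see the final formula is to observe that by crepancy and Step 1 the numerator $\int c_1(\hat X_\lambda)\wedge[\omega_\lambda]^{n-1}$ is literally \emph{independent} of $\lambda$ and equals $\frac{Sc(\omega)}{c_n}\mathrm{Vol}(\omega)\cdot n!$, because the $c_1$ of the AC Calabi--Yau resolution pieces vanishes (trivial $m$-pluricanonical bundle) and they are $i\partial\bar\partial$-exact at infinity, so they contribute nothing to this pairing. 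Combining with Step 2:
$$ Sc(\omega^\ast_\lambda) \;=\; \frac{Sc(\omega)\,\mathrm{Vol}(\omega)}{\mathrm{Vol}(\omega_\lambda)} \;=\; \frac{Sc(\omega)}{1+\langle \underline c,\lambda^{2n}\rangle},$$
and since the denominator is positive for $\lambda\leq\lambda_0$ (it tends to $1$), the sign of $Sc(\omega^\ast_\lambda)$ equals the sign of $Sc(\omega)$.

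\emph{Main obstacle.} The delicate point is Step 1 (and its quantitative refinement feeding Step 2): one must verify carefully that the rate $-2n$ decay of $\hat\omega_{Y_i}$ together with the $i\partial\bar\partial$-exactness at infinity genuinely produces a well-defined compactly supported class $[\hat\omega_{Y_i}]_0$ whose image under $h$ reconstructs $[\omega_\lambda]$ — i.e. that the cutoff interpolation on the annulus $B(2r_i(\lambda))\setminus B(r_i(\lambda))$ introduces only a compactly supported \emph{exact} error, and that the Mayer--Vietoris identification is functorial in the gluing. Once that bookkeeping is done, and once one invokes crepancy to control $c_1(\hat X_\lambda)$ together with the Calabi--Yau (Ricci-flat, hence $c_1$-trivial and exact-at-infinity) nature of the glued model so that it contributes nothing to the numerator, the scalar curvature formula and the sign statement follow formally.
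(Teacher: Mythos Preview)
Your proposal is correct and follows essentially the same route as the paper: Mayer--Vietoris for the class, crepancy $K_{\hat X_\lambda}=_{\Q}\pi^\ast K_X$ to make the numerator $c_1(\hat X_\lambda)\cup[\omega_\lambda]^{n-1}$ independent of $\lambda$, and then a direct expansion of the volume.

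Two places where the paper is sharper and your argument is imprecise. In Step~1 the paper invokes explicitly that $H^1(L_i;\R)=0$ (the links carry positive Sasaki--Einstein metrics), which together with the $i\partial\bar\partial$-exactness is what makes the compactly supported lifts $[\,\cdot\,]_0$ well defined; you flag this as the ``main obstacle'' but never supply the key vanishing. In Step~2 you attribute the fact that the volume correction is exactly of order $\lambda^{2n}$---with no cross terms $\lambda^{2k}$, $1\le k\le n-1$---to the AC rate $-2n$; that is not the reason. The paper's mechanism is topological: it observes that $[\hat\omega_{Y_i}]_0$ is Poincar\'e dual to a combination $\sum_j a_{i,j}[E_{i,j}]$ of exceptional divisors, so that $h([\omega]_0)$ and $h([E_{i,j}])$ admit representatives with disjoint supports on $\hat X_\lambda$, whence every cross term in $[\omega_\lambda]^n$ vanishes and one is left with exactly $[\omega]^n+\lambda^{2n}\bigl(\sum_j a_{i,j}[E_{i,j}]\bigr)^n$. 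The same disjoint-support (equivalently, projection-formula) observation is what makes the numerator computation in Step~3 a one-liner. None of this changes your conclusions, but it replaces a hand-wave by the actual reason.
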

 
\begin{proof}
Since  $H^1( \sqcup  \left(L_i \times(0,1)\right); \R) \cong H^1 (\sqcup L_i; \R)=0$ (the links $L_i$ admits an Einstein metrics of positive scalar curvature) and since our model metrics are $i\partial \bar \partial$-exact both at infinity in $\hat{Y_i}$ and near the singularities of $X$, we get well-defined cohomolgy classes in the compact supported cohomology of the two glued pieces. Hence the first part of the statement follows.
 
 To show the second part, first notice that thanks to the imposed strong asymptotics (see \cite{VCov} and \cite{Conlon-Hein}), we have that $[\omega_{\hat{Y}_i}]_0$ must be cohomologus to the Poincar\'e dual of a suitable positive linear combination of the irreducible componens of exceptional set of the resolutions $\hat{Y}_i$ (which must be divisorial) that is $[\omega_{\hat{Y}_i}]_0= \sum_{i,j} \lambda^2  a_{i,j}[E_{i,j}]$ with $a_{i,j} \in \R$. By construction the resolution $\hat{X}_{{\lambda}}$ is $\Q$-Crepant, i.e.,
 $K_{\hat{X}_{{\lambda}}}=_{\Q} \pi^\ast K_X$. It follows that 
 $$c_1(\hat{X}_{{\lambda}}) \cup [\omega_{{\lambda}}]^{n-1}= c_1(\pi^\ast  K_X) \cup h \left( \sum_{i,j} \lambda^2  a_{i,j}[E_{i,j}] \oplus [\omega]_0\right)^{n-1}=c_1(K_X) \cup [\omega]^{n-1}.$$ 
 Here the last $c_1(X)$ means the Ricci curvature of $\omega$ on $X^{reg}$ (the integral makes sense thanks to the imposed asymptotics of $\omega$).
 Hence the eventual constant scalar curvature must be equal to
 $$n Sc( \omega_{{\lambda}}^\ast) {Vol( \omega_{{\lambda}})}=  c_1(\hat{X}_{{\lambda}}) \cup [\omega_{{\lambda}}]^{n-1}=c_1(K_X) \cup [\omega]^{n-1}= n Sc(\omega)Vol(\omega),$$
 Finally it is sufficient to expand the volume in powers of $\lambda$, i.e.,
 $$n! Vol( \omega_{{\lambda}})= \left( \sum_{i,j} \lambda^2  a_{i,j}[E_{i,j}] \oplus [\omega]_0\right)^{n}=[\omega]^n+(\sum_{i,j} \lambda^2  a_{i,j}[E_{i,j}])^n.$$
 \end{proof}

 \begin{rmk}
  The above Lemma is nothing else but a generalization of the well-known formula for the relevant cohomology classes and values of the scalar curvatures in the standard blow-up at smooth point as in \cite{Arezzo-Pacard}. 
 \end{rmk}

\subsection{Glueing analysis}

From now, for sake of notational simplicity, we assume that $Sing(X)=\{p\}$.

\subsubsection{Linear analysis}

Let us denote with $\Delta_C$ the Laplace operator for a $2n$-dimensional Riemannian cone $C(L)=(\R^+\times L, dr^2+r^2 g_L)$, with $L$ smooth and compact. It is well-known that in radial coordinates the Laplacian takes the form
$$\Delta_C=\frac{\partial^2}{\partial r^2} + \frac{(2n-1)}{r} \frac{\partial}{\partial r}+\frac{1}{r^2} \Delta_L,$$
where $\Delta_L$ is the Laplace operator on the compact link $L$
\begin{defi} $\xi \in \R$ is called \emph{indicial root} for $\Delta_C$ (resp. $\Delta_C^2$) if there exists a non identically zero smooth function $f\in C^{\infty}(L;\R)$ such that $\Delta_C(r^\xi f)=\mathcal{O}(r^{\xi-2+\eta})$, for some positive $\eta$ (resp. $\Delta_C^2(r^\xi f)=\mathcal{O}(r^{\xi-4+\eta}))$ for $r<<1$.

We denote with $\mbox{Ind}(\Delta_C) \subseteq \R$ (resp. $\mbox{Ind}(\Delta_C^2) \subseteq \R$) the set of indicial roots.
\end{defi}

Note that the definition seems slightly different from the standard one given in \cite{Arezzo-Pacard}. However, as one can easily see on the cone, the two definition actually agrees. In our case it seems more convenient to work with the above definition.

The next Proposition surveys some known relevant properties of the indicial roots for the bi-Laplacian. The proof is standard (see e.g. Theorem 3.2 in \cite{Conlon-Hein} and references therein). 

\begin{prop}Using previous notation, $\mbox{Ind}(\Delta_C^2) \subseteq \mbox{Ind}(\Delta_C)\cup\{\mbox{Ind}(\Delta_C)+2\}.$
Moreover, if $C(L)$ is not isometric to the flat space, there exists a positive $\epsilon=\epsilon(\lambda_1(L))$, depending only on the first eigenvalue of the Laplacian $\lambda_1(L)$ on the link, such that
$$\mbox{Ind}(\Delta_C^2) \cap (3-2n-\epsilon, 1+\epsilon)=\{4-2n,0\}.$$
\end{prop}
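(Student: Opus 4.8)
The plan is to reduce the statement to a separation-of-variables computation on the cone, using the well-known relationship between the indicial roots of $\Delta_C$ and the eigenvalues of the link Laplacian $\Delta_L$. Writing $u=r^\xi f(\theta)$ with $f$ an eigenfunction of $\Delta_L$ with eigenvalue $\mu$, the radial ODE gives $\Delta_C(r^\xi f) = \left(\xi(\xi-1)+(2n-1)\xi - \mu\right) r^{\xi-2} f$, so $\xi$ is an indicial root for $\Delta_C$ precisely when $\xi^2+(2n-2)\xi-\mu=0$, i.e.
\begin{equation}\label{eq:indroot}
\xi_{\pm}(\mu) = -(n-1) \pm \sqrt{(n-1)^2+\mu}.
\end{equation}
For the bi-Laplacian, since $\Delta_C$ maps the space $r^\xi \cdot (\text{$\mu$-eigenfunctions})$ into $r^{\xi-2}\cdot(\text{$\mu$-eigenfunctions})$, any $\xi\in\mathrm{Ind}(\Delta_C^2)$ must satisfy either $\xi\in\mathrm{Ind}(\Delta_C)$ or $\xi+2\in\mathrm{Ind}(\Delta_C)$ — which is exactly the claimed inclusion $\mathrm{Ind}(\Delta_C^2)\subseteq \mathrm{Ind}(\Delta_C)\cup(\mathrm{Ind}(\Delta_C)+2)$. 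I would cite Theorem 3.2 of \cite{Conlon-Hein} for this, as the excerpt permits, but also sketch the one-line argument above for completeness.

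Next I would identify the low-lying indicial roots explicitly. From \eqref{eq:indroot} the eigenvalue $\mu=0$ (constants on $L$, which exist since $L$ is connected) produces the roots $\xi=0$ and $\xi=2-2n$; shifting by $2$ adds $\xi=2$ and $\xi=4-2n$. So among these ``trivial'' roots, the ones lying in a neighborhood of the interval $(3-2n,1)$ are exactly $\{4-2n,0\}$: indeed $2-2n<3-2n$ and $2>1$, so for $\epsilon$ small enough $2-2n$ and $2$ fall outside $(3-2n-\epsilon,1+\epsilon)$, while $4-2n$ and $0$ lie strictly inside (using $n\geq 2$, so $4-2n\le 0<1$, with strict interior membership clear). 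The content of the Proposition is then that \emph{no nonconstant} eigenfunction contributes a root in this window, and this is where the Lichnerowicz-type bound enters.

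The key step — and the main obstacle — is the lower bound on $\lambda_1(L)$, the first nonzero eigenvalue of $\Delta_L$. Because $L$ is the link of a Ricci-flat Kähler (hence Sasaki–Einstein) cone $C(L)$ of real dimension $2n$, the link is a $(2n-1)$-dimensional Einstein manifold with $\mathrm{Ric}_{g_L}=(2n-2)g_L$, and Lichnerowicz–Obata gives $\lambda_1(L)\geq 2n-1$, with equality iff $L$ is the round sphere, i.e. iff $C(L)$ is flat $\C^n$. Hence, when $C(L)$ is not flat, $\lambda_1(L)>2n-1$, say $\lambda_1(L)=2n-1+\delta$ with $\delta=\delta(\lambda_1(L))>0$. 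Plugging the smallest nonzero eigenvalue into \eqref{eq:indroot} and using monotonicity of $\xi_\pm$ in $\mu$, the corresponding roots are $\xi_\pm = -(n-1)\pm\sqrt{(n-1)^2+2n-1+\delta} = -(n-1)\pm\sqrt{n^2+\delta}$; at $\delta=0$ these are $\xi_+=1$ and $\xi_-=1-2n$, so for $\delta>0$ one gets $\xi_+>1$ and $\xi_-<1-2n$. Shifting $\xi_-$ up by $2$ gives $\xi_-+2<3-2n$. Therefore, choosing $\epsilon=\epsilon(\lambda_1(L))>0$ smaller than $\min\{\xi_+(2n-1+\delta)-1,\ (3-2n)-(\xi_-(2n-1+\delta)+2)\}$ and also less than $2n-1$ and $1$ in magnitude where needed, we ensure that every indicial root arising from a nonconstant link eigenfunction (or its $+2$ shift) lies outside $(3-2n-\epsilon,1+\epsilon)$, while the only roots from constants inside that interval are $4-2n$ and $0$. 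Assembling these observations yields $\mathrm{Ind}(\Delta_C^2)\cap(3-2n-\epsilon,1+\epsilon)=\{4-2n,0\}$, as claimed. The delicate point to get right is the bookkeeping of which $\pm$ branch and which $+0$ or $+2$ shift can conceivably re-enter the window, and verifying the strict inequalities $\xi_+(2n-1)=1$, $\xi_-(2n-1)+2=3-2n$ are the exact endpoints so that a strictly positive $\epsilon$ genuinely exists.
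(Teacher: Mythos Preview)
Your argument is correct and is precisely the standard separation-of-variables plus Lichnerowicz--Obata computation that the paper has in mind: the paper itself gives no proof at all, merely stating that ``the proof is standard (see e.g.\ Theorem 3.2 in \cite{Conlon-Hein} and references therein)''. One small slip to fix: when you write ``either $\xi\in\mathrm{Ind}(\Delta_C)$ or $\xi+2\in\mathrm{Ind}(\Delta_C)$'', the second alternative should read $\xi-2\in\mathrm{Ind}(\Delta_C)$ (since $\Delta_C^2(r^\xi f)=P(\xi)P(\xi-2)r^{\xi-4}f$), which is what actually yields the inclusion into $\mathrm{Ind}(\Delta_C)\cup(\mathrm{Ind}(\Delta_C)+2)$ that you correctly state immediately after.
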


\begin{rmk}If $C(L)=\C^n/\Gamma$, with $\Gamma$ non-trivial, $$\mbox{Ind}(\Delta_{\C/\Gamma}^2) \cap [2-2n,2]=\{2-2n, 4-2n,0,2\}.$$
\end{rmk}

We now compute the indicial roots of the Laplacian of an asympotically conical metric $\omega$ of rate $\mu$ near the apex.

\begin{prop}\label{Lap}
 Let $\omega$ be an asympotically conical metric $\omega$ of rate $\mu$. Then take coordinates on the cone $Y\cong \R^{+}\times  C(L)$ of the form $(r,\underline{x})$, where $\underline{x}$ are some normal coordinates at $p$ on the link $L$ and $r$ is the distance function from the apex with respect to the model CY metric $\omega_Y$. Then $\Delta_{\omega}$ admits the following smooth expansion:
 \begin{eqnarray*}
  \Delta_{\omega} &= &(1+\mathcal{O}(r^{\mu}))\partial^2_{rr}+\sum_i \mathcal{O}_i(r^{\mu-1})\partial^2_{rx_i}+\sum_{i,j}\left(\frac{\delta_{ij}}{r^2}+\mathcal{O}_{i,j}(r^{\mu-2})\right)\partial^2_{x_ix_j}+ \\ 
 & & +\left(\frac{2n-1}{r} +\mathcal{O}(r^{\mu-1}) \right)\partial_r+\sum_i \mathcal{O}_i(r^{\mu-2})\partial_{x_i}.
 \end{eqnarray*}
Thus $(\Delta_{\omega}-\Delta_{\omega_{Y}})(r^\alpha f)=\mathcal{O}(r^{\mu+\alpha-2})$. In particular, $\mbox{Ind}(\Delta_{\omega})=\mbox{Ind}(\Delta_{\omega_Y})$.

\end{prop}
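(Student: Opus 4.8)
The plan is to obtain the asymptotic expansion of $\Delta_\omega$ directly from the expansion of the metric coefficients and then read off the two consequences (the estimate on $(\Delta_\omega - \Delta_{\omega_Y})(r^\alpha f)$ and the equality of indicial roots) as formal corollaries. First I would write, in the coordinates $(r,\underline x)$ where $\underline x$ are $g_L$-normal coordinates on the link at the relevant point, the cone metric $g_Y = dr^2 + r^2 g_L$ in the block form $g_{rr}=1$, $g_{rx_i}=0$, $g_{x_ix_j} = r^2(\delta_{ij} + \mathcal O(|\underline x|^2))$, so that the inverse metric has $g_Y^{rr}=1$, $g_Y^{rx_i}=0$, $g_Y^{x_ix_j} = r^{-2}(\delta_{ij}+\mathcal O(|\underline x|^2))$, and $\sqrt{\det g_Y}$ behaves like $r^{2n-1}$ times a function of $\underline x$; this reproduces the classical formula $\Delta_C = \partial_{rr}^2 + \frac{2n-1}{r}\partial_r + \frac{1}{r^2}\Delta_L$ recalled above.

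Next I would invoke the defining decay property of an asymptotically conical metric of rate $\mu$. From the remark following the definition of conically singular metrics we have $|\nabla^k_{g_Y}(g - g_Y)|_{g_Y} = \mathcal O(r^{\mu - k})$; unwinding this in the coordinate frame $\{\partial_r, r^{-1}\partial_{x_i}\}$ (which is uniformly comparable to an orthonormal frame for $g_Y$) gives the componentwise bounds $g_{rr} = 1 + \mathcal O(r^\mu)$, $g_{rx_i} = \mathcal O(r^{\mu+1})$, $g_{x_ix_j} = r^2\delta_{ij} + \mathcal O(r^{\mu+2})$, together with matching estimates on their derivatives (each $\partial_r$ costs a factor $r^{-1}$, each $\partial_{x_i}$ costs nothing because $\underline x$ is an angular variable). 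Inverting, one gets $g^{rr} = 1 + \mathcal O(r^\mu)$, $g^{rx_i} = \mathcal O(r^{\mu-1})$, $g^{x_ix_j} = \frac{\delta_{ij}}{r^2} + \mathcal O(r^{\mu-2})$, and similarly $\partial_k \log\sqrt{\det g} = \partial_k\log\sqrt{\det g_Y} + \mathcal O(r^{\mu - 1 + (\text{$0$ if }\partial_{x_k},\ -1\text{ if }\partial_r)})$. Plugging these into $\Delta_\omega = \frac{1}{\sqrt{\det g}}\partial_a(\sqrt{\det g}\, g^{ab}\partial_b)$ and collecting terms by the second/first-order coordinate operator they multiply yields exactly the displayed expansion, where $\mathcal O_i(\cdot)$, $\mathcal O_{i,j}(\cdot)$ denote coefficient functions (not necessarily radial) with the stated radial decay uniformly in $\underline x$.

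Given the expansion, the estimate $(\Delta_\omega - \Delta_{\omega_Y})(r^\alpha f) = \mathcal O(r^{\mu + \alpha - 2})$ for $f \in C^\infty(L)$ is immediate: the difference operator is $\mathcal O(r^\mu)\partial_{rr}^2 + \mathcal O(r^{\mu-1})\partial_{rx_i}^2 + \mathcal O(r^{\mu-2})\partial_{x_ix_j}^2 + \mathcal O(r^{\mu-1})\partial_r + \mathcal O(r^{\mu-2})\partial_{x_i}$, and applying it to $r^\alpha f(\underline x)$ produces in every term a factor $r^{\alpha-2}$ times $r^\mu$ (the $r$-derivatives lower the $r$-power by the right amount, the $x$-derivatives only act on $f$ and on the bounded coefficient functions). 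Hence $\Delta_\omega(r^\alpha f) = \Delta_{\omega_Y}(r^\alpha f) + \mathcal O(r^{\mu + \alpha - 2})$. If $\alpha$ is an indicial root of $\Delta_{\omega_Y}$ with eigenfunction $f$, then $\Delta_{\omega_Y}(r^\alpha f) = \mathcal O(r^{\alpha - 2 + \eta})$ for some $\eta > 0$, so $\Delta_\omega(r^\alpha f) = \mathcal O(r^{\alpha-2+\min(\eta,\mu)})$ with $\min(\eta,\mu) > 0$, giving $\alpha \in \mathrm{Ind}(\Delta_\omega)$; the reverse inclusion is identical, writing $\Delta_{\omega_Y} = \Delta_\omega - (\Delta_\omega - \Delta_{\omega_Y})$. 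Therefore $\mathrm{Ind}(\Delta_\omega) = \mathrm{Ind}(\Delta_{\omega_Y})$, which here equals $\mathrm{Ind}(\Delta_C)$.

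I expect the only genuinely delicate point to be the bookkeeping that turns the coordinate-free bound $|\nabla^k_{g_Y}(g-g_Y)|_{g_Y} = \mathcal O(r^{\mu-k})$ into the componentwise estimates in the $(r,\underline x)$ chart with the correct powers of $r$ for mixed $\partial_r$/$\partial_{x_i}$ derivatives — i.e. keeping straight that $\partial_r$ is a unit-length vector while $\partial_{x_i}$ has length $\sim r$, so differentiating a coefficient in $x_i$ does not improve (or worsen) the radial order. Everything else is the routine substitution into the Laplace–Beltrami formula and matching the Landau symbols term by term; this is what the authors summarize with the phrase ``admits the following smooth expansion.''
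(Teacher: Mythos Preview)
Your proposal is correct and follows essentially the same approach as the paper: write the metric in polar normal coordinates with the block-form decay estimates $g_{rr}=1+\mathcal O(r^\mu)$, $g_{rx_i}=\mathcal O(r^{\mu+1})$, $g_{x_ix_j}=r^2(\delta_{ij}+\mathcal O(r^\mu))$, and then substitute into the Laplace--Beltrami formula $\Delta = \frac{1}{\sqrt{\det g}}\partial_a(\sqrt{\det g}\, g^{ab}\partial_b)$. The paper records this as a ``long but straightforward computation''; your write-up simply makes the bookkeeping (the $\partial_r$ vs.\ $r^{-1}\partial_{x_i}$ scaling and the indicial-root consequence) explicit.
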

\begin{proof}
 Write the metric $g$ in ``polar normal coordinates'' as $$g=\left(\begin{matrix} 1+\mathcal{O}(r^\mu) & \mathcal{O}(r^{\mu+1}) \\ \mathcal{O}(r^{\mu+1}) & r^2(1+\mathcal{O}(r^\mu))\end{matrix}\right).$$
 Then the estimate follows by a long but straightforward computation using the formula of the Laplacian $\Delta=\sqrt{g}\partial_i(\sqrt{g}g^{ij}\partial_j)$.
\end{proof}

It is well-known (e.g., \cite{LeBrun-Simanca}) that the linearized operator of the scalar curvature is equal to:
$$\mathbb{L}_\omega(\varphi)= \Delta_{\omega}^2(\phi) + <Ric(\omega),i\partial\bar\partial \varphi>.$$ 
Then, thanks to the above estimates, we have:

\begin{prop} Let $\mathbb{L}_{\omega}$ be the linearized operator of the scalar curvature. Then $\mbox{Ind}(\mathbb{L}_{\omega})=\mbox{Ind}(\Delta_{\omega_{Y}}^2)$. In particular, there are no-indicial roots in $(4-2n,0)$.
 
\end{prop}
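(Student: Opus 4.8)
The plan is to exhibit $\mathbb{L}_\omega$ as a subleading perturbation of the model bi-Laplacian $\Delta_{\omega_Y}^2$ in the indicial sense, i.e.\ to establish that for every $\alpha\in\R$ and every $f\in C^\infty(L;\R)$ one has
$$(\mathbb{L}_\omega-\Delta_{\omega_Y}^2)(r^\alpha f)=\mathcal{O}(r^{\mu+\alpha-4}).$$
Granting this, the equality $\mbox{Ind}(\mathbb{L}_\omega)=\mbox{Ind}(\Delta_{\omega_Y}^2)$ follows formally from the definition of indicial root. Indeed, if $\Delta_{\omega_Y}^2(r^\alpha f)=\mathcal{O}(r^{\alpha-4+\eta_0})$ for some $\eta_0>0$, then $\mathbb{L}_\omega(r^\alpha f)=\mathcal{O}(r^{\alpha-4+\min(\eta_0,\mu)})$, so $\alpha\in\mbox{Ind}(\mathbb{L}_\omega)$; conversely, if $\mathbb{L}_\omega(r^\alpha f)=\mathcal{O}(r^{\alpha-4+\eta})$, then $\Delta_{\omega_Y}^2(r^\alpha f)=\mathbb{L}_\omega(r^\alpha f)-(\mathbb{L}_\omega-\Delta_{\omega_Y}^2)(r^\alpha f)=\mathcal{O}(r^{\alpha-4+\min(\eta,\mu)})$, so $\alpha\in\mbox{Ind}(\Delta_{\omega_Y}^2)$. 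This reduces the whole statement to the displayed estimate.

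To prove the estimate I would split $\mathbb{L}_\omega-\Delta_{\omega_Y}^2=(\Delta_\omega^2-\Delta_{\omega_Y}^2)+\langle Ric(\omega),i\partial\bar\partial(\cdot)\rangle$ and treat the two terms separately. For the fourth-order part, write $\Delta_\omega^2-\Delta_{\omega_Y}^2=(\Delta_\omega-\Delta_{\omega_Y})\Delta_{\omega_Y}+\Delta_\omega(\Delta_\omega-\Delta_{\omega_Y})$. Since on the cone $\Delta_{\omega_Y}(r^\alpha f)=r^{\alpha-2}\big(\alpha(\alpha+2n-2)f+\Delta_L f\big)$ is again of the form $r^{\alpha-2}g$ with $g\in C^\infty(L;\R)$, Proposition \ref{Lap} gives $(\Delta_\omega-\Delta_{\omega_Y})\Delta_{\omega_Y}(r^\alpha f)=\mathcal{O}(r^{\mu+\alpha-4})$; for the remaining term, $(\Delta_\omega-\Delta_{\omega_Y})(r^\alpha f)=\mathcal{O}(r^{\mu+\alpha-2})$ with all radial and tangential derivatives controlled at the corresponding rates (the ``smooth expansion'' in Proposition \ref{Lap}), and applying $\Delta_\omega$ — whose leading behaviour is that of $\Delta_{\omega_Y}$ and hence lowers the radial weight by exactly two — again yields $\mathcal{O}(r^{\mu+\alpha-4})$. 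For the zeroth-order part I would use that the Calabi--Yau cone metric $\omega_Y$ is Ricci-flat, so $Ric(\omega)=Ric(\omega)-Ric(\omega_Y)$, and the metric decay $|\nabla^k_{g_Y}(g-g_Y)|_{g_Y}=\mathcal{O}(r^{\mu-k})$ noted right after the definitions, used with $k=2$ (the quadratic and curvature terms being no worse since $\mu>0$), gives $|Ric(\omega)|_{g_Y}=\mathcal{O}(r^{\mu-2})$; since $|i\partial\bar\partial(r^\alpha f)|_{g_Y}=\mathcal{O}(r^{\alpha-2})$, the contraction is $\mathcal{O}(r^{\mu+\alpha-4})$. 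Adding the two contributions proves the estimate, hence $\mbox{Ind}(\mathbb{L}_\omega)=\mbox{Ind}(\Delta_{\omega_Y}^2)$.

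For the final sentence I would simply invoke the Proposition on indicial roots of the bi-Laplacian: the link $L$ of a CY cone whose apex is an honest singular point of $X$ cannot be the round sphere, so $C(L)$ is not isometric to flat space, and therefore $\mbox{Ind}(\Delta_{\omega_Y}^2)\cap(3-2n-\epsilon,1+\epsilon)=\{4-2n,0\}$; in particular no indicial root of $\mathbb{L}_\omega$ lies in the open interval $(4-2n,0)$.

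The main obstacle is the bookkeeping in the third paragraph, specifically checking that $\Delta_\omega$ applied to the remainder $(\Delta_\omega-\Delta_{\omega_Y})(r^\alpha f)$ genuinely lowers the radial weight by exactly two. This is where one needs the full strength of the ``smooth'' (derivative-wise) expansion of $\Delta_\omega$ in Proposition \ref{Lap}: the remainder is not literally of the form $r^\beta g$, but it does admit a controlled asymptotic radial expansion with leading weight $\mu+\alpha-2$, and one must argue that the cone-type mapping properties of $\Delta_\omega$ apply term by term to such an expansion. Everything else is routine.
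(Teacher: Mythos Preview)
Your argument is correct and follows essentially the same route as the paper: both reduce to showing that $\mathbb{L}_\omega$ differs from the model bi-Laplacian by a term of strictly higher radial order, using $Ric(\omega)=\mathcal{O}(r^{\mu-2})$ for the lower-order piece and Proposition~\ref{Lap} for the fourth-order piece. Your write-up is in fact more careful than the paper's two-line proof, which compares $\mathbb{L}_\omega$ only to $\Delta_\omega^2$ and leaves the passage from $\Delta_\omega^2$ to $\Delta_{\omega_Y}^2$ (your ``bookkeeping'' paragraph) entirely implicit in the prior Proposition.
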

\begin{proof}
 It is sufficient to observe that $Ric(\omega)=\mathcal{O}(r^{\mu-2})$. Hence $( \mathbb{L}_\omega-\Delta_{\omega}^2 )(r^\alpha \varphi)=\mathcal{O}(r^{\alpha-4+\epsilon})$ for some $\epsilon \leq \mu$.
\end{proof}

A completely analogous discussion can be made for the indicial roots for the AC CY metrics on $\hat{Y}$.

Now we introduce the relevant function spaces. We start on the model $\hat{Y}$. Denote with $C^{k,\alpha}_\delta(\hat{Y};\R)$ the space of $C^{k,\alpha}_{loc}$ regular real functions $\varphi$ for which the following norm is finite:
\begin{equation*}\begin{split}
 |\varphi|_{C^{k,\alpha}_\delta}:= &\sum_{i=1}^k \sup_{\hat{Y}}|\rho^{j-\delta} \nabla_{\omega_{\hat{Y}}}^j \varphi | + \\ & + \sup_{p\neq q\in \hat{Y}, d(p,q) \leq inj}\left( \min\{\rho(p),\rho(q)\}^{-\delta+k+\alpha} \frac{|\nabla^k \varphi(p) - \nabla^k \varphi(q)|}{d(p,q)^\alpha} \right), 
\end{split}\end{equation*}
where $\rho$ is a weight function equal to $r_Y$ near infinity (and in the region of glueing) and decreasing to a non zero positive constant in the interior. Note that changing the weight function or the conical metrics by objects of the same behavior does not change the functional spaces and the norm remain equivalent.  

It follows by standard arguments (see e.g. \cite{Pacini} and references therein)  that if $\delta$ is not an indicial roots, $\mathbb{L}$ is a Fredholm operator of index zero. Hence $\dim\, Ker ({\mathbb{L}_\delta})=\dim\, Im (\mathbb{L}_{4-2n-\delta})$.

\begin{lem} \label{LA} If $\delta \in (4-2n,0)$ then
$$ \mathbb{L}_{\omega_{\hat {Y}}}: \mathcal{C}^{4,\alpha}_{\delta}(\hat{Y};\R) \longrightarrow  \mathcal{C}^{0,\alpha}_{\delta-4}(\hat{Y};\R), $$
is an isomorphism.
\end{lem}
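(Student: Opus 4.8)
The plan is to establish the isomorphism via the Fredholm theory already set up: since $\mathbb{L}_{\omega_{\hat{Y}}}$ acts between the weighted spaces $\mathcal{C}^{4,\alpha}_{\delta} \to \mathcal{C}^{0,\alpha}_{\delta-4}$, and $\delta \in (4-2n,0)$ is not an indicial root by the Proposition on indicial roots of $\mathbb{L}_{\omega_{\hat{Y}}}$, the operator is Fredholm of index zero. Hence it suffices to prove injectivity; surjectivity then follows automatically. Equivalently, by the duality $\dim\mathrm{Ker}(\mathbb{L}_\delta)=\dim\mathrm{Coker}(\mathbb{L}_\delta)=\dim\mathrm{Ker}(\mathbb{L}_{4-2n-\delta})$, and since the interval $(4-2n,0)$ is symmetric about $2-n$ under $\delta\mapsto 4-2n-\delta$ (which maps the interval to itself), it is enough to show $\mathrm{Ker}(\mathbb{L}_\delta)=0$ for $\delta$ in this range.

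First I would show that any $\varphi \in \mathcal{C}^{4,\alpha}_\delta(\hat{Y};\R)$ with $\delta<0$ in the kernel actually decays: pairing $\mathbb{L}_{\omega_{\hat{Y}}}\varphi = 0$ against $\varphi$ and integrating by parts on $\hat{Y}$ — which is legitimate because the $\mathcal{O}(\rho^\delta)$ decay with $\delta<0$ kills all boundary terms at infinity on the AC end — gives
\begin{equation*}
0 = \int_{\hat{Y}} \varphi\, \mathbb{L}_{\omega_{\hat{Y}}}\varphi \; = \; \int_{\hat{Y}} |\mathcal{D}\varphi|^2,
\end{equation*}
where $\mathcal{D}=\bar\partial \circ \mathrm{grad}^{1,0}$ is the Lichnerowicz operator whose kernel consists of potentials of holomorphic vector fields (here one uses that $\mathbb{L}_{\omega_{\hat{Y}}}=\mathcal{D}^*\mathcal{D}$ for the Calabi-Yau, hence cscK, metric $\omega_{\hat{Y}}$, which is the standard Lichnerowicz--Matsushima identity). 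Therefore $\varphi$ is the potential of a holomorphic vector field $V = \mathrm{grad}^{1,0}\varphi$ on $\hat{Y}$, and $V$ decays at the rate $\rho^{\delta-1}\to 0$ at infinity.

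Next I would rule out such a $V$. The vector field $V$ is holomorphic on $\hat{Y}$ and decays at infinity; by Riemann extension / the biholomorphism $\pi$ away from the exceptional set it descends to a holomorphic vector field on the cone $Y$ decaying at the apex and at infinity, hence — being bounded and vanishing at infinity on the affine cone $Y$ — it must vanish identically (a holomorphic vector field on $Y$ decaying at infinity is zero; alternatively, invoke that $\hat{Y}$ carries no nonzero decaying holomorphic vector fields because the AC Calabi-Yau structure has maximal volume growth and $H^0(\hat{Y},T\hat{Y})$ with the given decay is trivial — this is where the hypothesis that $X$, and correspondingly the local models, have discrete automorphisms is morally reflected). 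Thus $\mathrm{grad}^{1,0}\varphi=0$, so $\varphi$ is constant, and constants are not in $\mathcal{C}^{4,\alpha}_\delta$ for $\delta<0$ unless $\varphi\equiv 0$. This gives injectivity, and with the index-zero Fredholm property, the isomorphism follows.

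The main obstacle I anticipate is the integration-by-parts/decay bookkeeping: one must check carefully that with weight $\delta \in (4-2n,0)$ the function $\varphi$ and its derivatives decay fast enough that \emph{all} boundary terms in the (fourth-order) integration by parts vanish on the AC end, and simultaneously that no boundary contribution appears from the interior/exceptional divisor (there is no boundary there, but one should confirm the $\mathcal{C}^{4,\alpha}_{loc}$ regularity up to the exceptional set suffices). The lower endpoint $4-2n$ is exactly the threshold below which the constant-producing indicial root $0$ and its dual $4-2n$ would enter and destroy injectivity, so the precise range $(4-2n,0)$ is essential and must be used, not merely the sign $\delta<0$; being careful that the decaying holomorphic vector field argument genuinely forces $V=0$ (rather than leaving a finite-dimensional space) is the other delicate point, and it is precisely here that the $i\partial\bar\partial$-exactness at infinity and the Calabi-Yau condition on $\hat{Y}$ do the work.
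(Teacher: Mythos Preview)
Your approach can be made to work but is more involved than the paper's, and one key step is not adequately justified.

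The paper's proof exploits Ricci-flatness more directly: since $\omega_{\hat{Y}}$ is Calabi--Yau, the Ricci term in $\mathbb{L}_{\omega_{\hat{Y}}}$ vanishes identically, so $\mathbb{L}_{\omega_{\hat{Y}}} = \Delta_{\omega_{\hat{Y}}}^2$. One then applies Liouville's theorem twice: if $\Delta^2\varphi=0$ with $\varphi\in\mathcal{C}^{4,\alpha}_\delta$ and $\delta<0$, then $\Delta\varphi$ is a bounded harmonic function decaying at infinity on a complete manifold of non-negative Ricci curvature, hence zero; then $\varphi$ itself is harmonic and decaying, hence zero. No integration by parts, no holomorphic vector fields.

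Your route via $\mathbb{L}=\mathcal{D}^*\mathcal{D}$ is the technique the paper reserves for Lemma~\ref{LB} on $X^{reg}$, where the metric is merely cscK and the bi-Laplacian reduction is unavailable. Using it on $\hat{Y}$ is legal but overkill, and you have not completed it: the claim that a decaying holomorphic vector field on $\hat{Y}$ must vanish is asserted, not proved. Your parenthetical that ``this is where the hypothesis that $X$ \ldots have discrete automorphisms is morally reflected'' is misleading: $\hat{Y}$ carries a nontrivial $\C^*$ action from the Euler field, so its automorphism group is certainly not discrete, and discreteness of $\mathrm{Aut}(X)$ plays no role in this lemma. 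What one actually needs is that no holomorphic vector field on $\hat{Y}$ \emph{decays} at infinity, and the cleanest justification for that again uses Ricci-flatness (e.g.\ via a Bochner/maximum-principle argument), at which point the paper's two-Liouville argument is strictly simpler.
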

\begin{proof}
 Thanks to the above remarks, it is sufficient to check that $\mathbb{L}_{\omega_{\hat {Y}}}$ has no non-trivial kernel. But this follows by applying twice the Liouville's Theorem, since $\mathbb{L}_{\omega_{\hat {Y}}}=\Delta_{\omega_{\hat {Y}}}^2$ being $\omega_{\hat {Y}}$ Ricci-flat by hypothesis. 
\end{proof}

Similarly one can define weighted space $C^{k,\alpha}_\delta(X^{reg};\R)$ on the non compact manifold $X^{reg}$ (now with weight function equal to the distance from the singularity near the singular locus and constant away from it). Then one can prove the following:

\begin{lem} \label{LB} If $\delta \in (4-2n,0)$ and $Aut(X)$ is discrete and let 
$$ \mathbb{L}_{\omega}: \mathcal{C}^{4,\alpha}_{\delta}(X^{reg};\R) \longrightarrow  \mathcal{C}^{0,\alpha}_{\delta-4}(X^{reg};\R). $$
Then $ker(\mathbb{L}_{\omega})\cong coker(\mathbb{L}_{\omega})$ consists of constant functions only. 
\end{lem}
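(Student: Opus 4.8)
The plan is to combine the weighted Fredholm theory recalled above with the Lichnerowicz factorisation of $\mathbb{L}_\omega$ that is available because $\omega$ is cscK. Write $\mathbb{L}_\gamma$ for $\mathbb{L}_\omega$ regarded as a map $\mathcal{C}^{4,\alpha}_\gamma(X^{reg};\R)\to\mathcal{C}^{0,\alpha}_{\gamma-4}(X^{reg};\R)$. Since $\omega$ is conically singular of rate $\mu>0$, the indicial root computations above give $\mathrm{Ind}(\mathbb{L}_\omega)=\mathrm{Ind}(\Delta_{\omega_Y}^2)$, which contains no root in $(4-2n,0)$; hence $\mathbb{L}_\gamma$ is Fredholm for every $\gamma$ in that interval (Lockhart--McOwen type theory; cf.\ \cite{Pacini}, \cite{Conlon-Hein}). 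First I would check that its index vanishes: $\mathbb{L}_\omega$ is formally self-adjoint for the $L^2(\omega^n)$-pairing, which identifies $\mathrm{coker}(\mathbb{L}_\gamma)$ with $\ker(\mathbb{L}_{4-2n-\gamma})$; the conjugate weight $4-2n-\gamma$ again lies in $(4-2n,0)$, so the index is constant on that interval (no indicial root to cross) and $\mathrm{ind}(\mathbb{L}_\gamma)=-\mathrm{ind}(\mathbb{L}_{4-2n-\gamma})=-\mathrm{ind}(\mathbb{L}_\gamma)=0$. Note also that constant functions lie in $\mathcal{C}^{4,\alpha}_\gamma$ whenever $\gamma\le 0$ (the weight $\rho^{-\gamma}$ stays bounded at $p$) and are annihilated by $\mathbb{L}_\omega$. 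Hence it is enough to prove that $\ker(\mathbb{L}_\delta)$ consists of constants only, for every $\delta\in(4-2n,0)$; the statement for the cokernel then follows by applying this at the conjugate weight $4-2n-\delta<0$, where $1$ spans the kernel and, via the $L^2$-pairing, represents the whole of $\mathrm{coker}(\mathbb{L}_\delta)$.

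Next I would analyse a kernel element: let $u\in\mathcal{C}^{4,\alpha}_\delta(X^{reg};\R)$ with $\mathbb{L}_\omega u=0$ and $\delta\in(4-2n,0)$. Elliptic regularity on the conical end together with the asymptotic expansion of $u$ along the indicial roots of $\mathbb{L}_\omega$ in $(\delta,1)$ improves its decay: the only such root is $0$, and for $n\ge 3$ one checks that $0\notin\mathrm{Ind}(\Delta_C)+2$, so the associated mode is the constant function and carries no $\log r$ term. Therefore $u=\kappa+\tilde u$ with $\kappa\in\R$ and $\tilde u\in\mathcal{C}^{4,\alpha}_{\delta''}(X^{reg};\R)$ for some $\delta''>0$ (indeed $\delta''=1$ works, since $1\notin\mathrm{Ind}(\mathbb{L}_\omega)$), i.e.\ $|\nabla^k_\omega\tilde u|_\omega=\mathcal{O}(r^{\delta''-k})$ near $p$, and $\mathbb{L}_\omega\tilde u=0$ as $\mathbb{L}_\omega\kappa=0$.

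Finally I would kill $\tilde u$ using the cscK hypothesis. Because $Sc(\omega)$ is constant, $\mathbb{L}_\omega=\mathcal{D}_\omega^\ast\mathcal{D}_\omega$ up to a positive constant, where $\mathcal{D}_\omega\varphi=\bar\partial(\mathrm{grad}^{1,0}_\omega\varphi)$ (\cite{LeBrun-Simanca}). Integrating $\langle\mathbb{L}_\omega\tilde u,\tilde u\rangle$ over $X\setminus B(r_0,p)$ and letting $r_0\to 0$, the boundary terms over the geodesic sphere $\{r=r_0\}$ are $\mathcal{O}(r_0^{2\delta''-3})$ pointwise, hence $\mathcal{O}(r_0^{2\delta''+2n-4})$ after integration against the induced measure, so they vanish in the limit because $n\ge 3$ and $\delta''>0$; thus $\int_{X^{reg}}|\mathcal{D}_\omega\tilde u|^2_\omega\,\omega^n=0$, i.e.\ $\mathcal{D}_\omega\tilde u=0$ and $V:=\mathrm{grad}^{1,0}_\omega\tilde u$ is a holomorphic vector field on $X^{reg}$, bounded near $p$. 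Since $X$ is normal and $\Theta_X$ is reflexive while $\mathrm{codim}_X(X\setminus X^{reg})=n\ge 2$, $V$ extends to a global holomorphic vector field on $X$; as $\mathrm{Aut}(X)$ is discrete, $H^0(X,\Theta_X)=0$, so $V\equiv 0$, i.e.\ $\bar\partial\tilde u=0$. A real-valued holomorphic function on the connected manifold $X^{reg}$ is constant, and $\tilde u\to 0$ at $p$, so $\tilde u\equiv 0$ and $u=\kappa$. This proves $\ker(\mathbb{L}_\delta)=\R$, and by the first paragraph $\mathrm{coker}(\mathbb{L}_\delta)\cong\ker(\mathbb{L}_{4-2n-\delta})=\R$, both consisting of constants. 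The main obstacle is the boundary-term bookkeeping in this integration by parts, where one must verify that the conical singularity produces no obstruction; it is precisely there, and in excluding a $\log r$ term at the indicial root $0$, that the hypothesis $n\ge 3$ is used, the extension of $V$ across $p$ and the factorisation of $\mathbb{L}_\omega$ being otherwise routine.
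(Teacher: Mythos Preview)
Your argument is correct and follows essentially the same route as the paper's: improve the decay of a kernel element using the absence of indicial roots in $(4-2n,0)$, integrate by parts using the Lichnerowicz factorisation $\mathbb{L}_\omega=D^\ast D$ available since $\omega$ is cscK, deduce that $(\bar\partial u)^{\sharp_\omega}$ is holomorphic on $X^{reg}$, extend it across the singularity by normality, and kill it via the discreteness of $\mathrm{Aut}(X)$. You are in fact a bit more careful than the paper---you spell out the index-zero and cokernel-via-duality step, subtract off the leading constant before integrating, and explicitly exclude a $\log r$ term at the indicial root $0$---but these are refinements of the same proof rather than a different approach.
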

\begin{proof}

As before it suffices to study the kernel. By the hypothesis on the weights and the indicial-roots properties, it follows that any $\varphi \in ker(\mathbb{L}_{\omega})$ must in fact be in $\mathcal{C}^{4,\alpha}_{0}(X^{reg};\R)$. Thus, by integration by parts around a singularity $p$,
$$0=\int_{X\setminus B_\omega(p,r)} \varphi \mathbb{L}_\omega(\varphi) \omega^n= \int_{X\setminus B_\omega(p,r)} |\bar\partial(\bar \partial \phi)^{\sharp_\omega}|^2 \omega^n +\mathcal{O}(r^{2n-4})$$
Here we used the well-known fact (see for example \cite{Arezzo-Pacard}) that for a cscK metric the linearized operator $\mathbb{L}_\omega$ is equal to the Lichnerowitz operator $D^\ast D$ where $D(\varphi):= \bar\partial(\bar \partial \varphi )^{\sharp_\omega}$ and $D^\ast$ is the (formal) $L^2$-adjoint. It follows that the $(1,0)$ vector field $(\bar \partial \phi)^{\sharp_\omega}$  must be holomorphic on $X^{reg}$.
Since the variety $X$ is normal, it follows \cite{Fischer} that one has a natural injection
$$i: H^0(X^{reg}, TX^{reg}) \hookrightarrow H^0(X, \mathcal{H}om (\Omega^1_X,\mathcal{O}_X)).$$
Since, by hypothesis, $Aut(X)$ is assumed to be discrete, the latter cohomology group vanishes and thus $(\bar \partial \phi)^{\sharp_\omega}=0$. Being the fuction $\varphi$ real, it follows that $\varphi$ must be constant. 

\end{proof}

\begin{rmk}\label{GTrick}
 
In order address the issue of the kernel, we follow the trick of Szekelyhidi in \cite{Szekelyhidi} of considering the following modification of the operator $\mathbb{L}_\omega$. Namely we consider the operator $\tilde{\mathbb{L}}_\omega(\varphi):= \mathbb{L}_\omega (\varphi) + \varphi(q)$, for $q\in X^{reg}$, taken, say, at distance $1$ from the singular set. It is then obvious that such operator $\tilde{\mathbb{L}}_\omega$ is now an \emph{isomorphism}.
 
\end{rmk}

Finally we introduce the weight spaces $C^{k,\alpha}_{\delta}$ on $\hat{X}$ with respect to the preglued metric $\omega_\lambda$. These spaces are given by $C^{k,\alpha}$ functions where the norm we consider is defined in a similar way as above but with weight function $\rho$, keeping in mind the identification given in the pre-glueing construction, is equal to the constant $\lambda$ if $r<\lambda$ (thought in the resolution!), it is an increasing function from $\lambda$ and $2\lambda$ which then becomes equal to the distance $r$ until $r=\frac{1}{2}$ and finally increases to a constant.
Another way to think about such norm is via glueing of rescaled version of the weighted norms on the glueing pieces (compare \cite{Szekelyhidi}). That is
$$ |\varphi|_{C^{k,\alpha}_\delta(\hat{X})} \simeq |\chi \varphi|_{C^{k,\alpha}_\delta(X^{reg},\omega)} + \lambda^{-\delta} |(1-\chi) \varphi |_{C^{k,\alpha}_\delta(\hat{Y}, \lambda^2\omega_{\hat{Y}})},$$
where $\chi$ is the cut-off function introduced in the pre-glueing. This equivalence just follows by the scaling properties of the identification  $\sigma_\lambda$.

The following useful estimates are elementary (compare Lemma 7.8 in \cite{Szekelyhidi}), and similar to Proposition \ref{Lap}. There exist constants $c_0,C$ such that if $|\varphi|_{C^{4,\alpha}_2} \leq c_0$ then $\omega_\varphi:= \omega_\lambda+i\partial\bar\partial \varphi$ is positive and:
\begin{equation}\label{Est}\begin{split}
 & |g_{\varphi}-g_\lambda|_{C^{2,\alpha}_{\delta-2}}+|Rm_\varphi-Rm_\lambda|_{C^{0,\alpha}_{\delta-4}} \leq C  |\varphi|_{C^{4,\alpha}_{\delta}} \\ &
  |\mathbb{L}_\varphi-\mathbb{L}_\lambda|_{B(C^{4,\alpha}_\delta,C^{0,\alpha}_{\delta-4})} \leq C |\varphi|_{C^{4,\alpha}_{2}}
\end{split}\end{equation}

Then let $\tilde{\mathbb{L}}_\lambda$ be the linear operator equal to $\mathbb{L}_\lambda+ \delta_q$. Then, combining all the results and definitions above we have the following uniform estimate for the inverse of $\tilde{\mathbb{L}}_\lambda$. Since the proof -given the invertibility of the corresponding operators on the pieces of the glueing construction-  is identical to Theorem $7.9$ in \cite{Szekelyhidi}, we will simply sketch the main ideas for readers' convenience.

\begin{prop}[Theorem 7.9 \cite{Szekelyhidi}]  \label{UI} Let $\delta \in (4-2n,0)$ and let $\omega_\lambda$ be the family pre-glued K\"ahler metrics. Then for $\lambda$ sufficiently small 
 $$ \tilde{\mathbb{L}}_{\lambda}: \mathcal{C}^{4,\alpha}_{\delta}(\hat{X}_\lambda;\R) \longrightarrow  \mathcal{C}^{0,\alpha}_{\delta-4}(\hat{X}_\lambda;\R) $$
 is an isomorphism, with $| \tilde{\mathbb{L}}_{\lambda}^{-1} |< C$ where $C$ is independent of $\lambda$.
\end{prop}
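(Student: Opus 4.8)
The plan is to establish the uniform bound $|\tilde{\mathbb{L}}_\lambda^{-1}| < C$ by a standard contradiction-and-rescaling argument, exactly as in Theorem 7.9 of \cite{Szekelyhidi}, relying on the invertibility of the model operators already established in Lemma \ref{LA}, Lemma \ref{LB} and Remark \ref{GTrick}. Suppose no such uniform $C$ exists. Then one can find a sequence $\lambda_k \to 0$ and functions $\varphi_k \in \mathcal{C}^{4,\alpha}_\delta(\hat{X}_{\lambda_k};\R)$ with $|\varphi_k|_{C^{4,\alpha}_\delta(\hat{X}_{\lambda_k})} = 1$ but $|\tilde{\mathbb{L}}_{\lambda_k}(\varphi_k)|_{C^{0,\alpha}_{\delta-4}(\hat{X}_{\lambda_k})} \to 0$. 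The first step is to locate where the norm of $\varphi_k$ is concentrated: by the equivalence of the glued norm with the sum of the rescaled norms on the two pieces, at least one of the two contributions — the one on $X^{reg}$ measured in $\omega$, or the one on $\hat{Y}$ measured in $\lambda_k^2\omega_{\hat{Y}}$ — is bounded below by a fixed positive constant along a subsequence.

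Next I would analyze the two cases by taking limits. In the case where the mass sits on the $X^{reg}$ side, one uses the weighted Schauder estimates together with the uniform control on $\omega_\lambda$ and its curvature away from the gluing neck (the estimates \eqref{Est}) to extract a $C^{4,\alpha}_{loc}$-convergent subsequence; the limit is a nonzero $\varphi_\infty \in \mathcal{C}^{4,\alpha}_\delta(X^{reg};\R)$ solving $\tilde{\mathbb{L}}_\omega(\varphi_\infty) = 0$, contradicting the isomorphism property of $\tilde{\mathbb{L}}_\omega$ noted in Remark \ref{GTrick} (which holds precisely because $Aut(X)$ is discrete, by Lemma \ref{LB}). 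In the case where the mass sits on the $\hat{Y}$ side, one first notes that the added point functional $\delta_q$ disappears in the rescaled limit (its weight is $\lambda_k^{-\delta} \to 0$ since $\delta < 0$... more precisely the anchor point $q$ escapes to the large end of $X^{reg}$), so one rescales by $\sigma_{\lambda_k}$ and passes to a limit that is a nonzero bounded (in the weighted $\mathcal{C}^{4,\alpha}_\delta$ norm, $\delta < 0$) solution of $\mathbb{L}_{\omega_{\hat{Y}}}(\varphi_\infty) = \Delta^2_{\omega_{\hat{Y}}}(\varphi_\infty) = 0$ on the AC CY resolution $\hat{Y}$, contradicting Lemma \ref{LA}. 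A separate subcase to rule out is that the mass concentrates entirely in the neck region $B(2r(\lambda)) \setminus B(r(\lambda))$: there the metric is $\mathcal{O}(r^\mu)$-close to the exact cone (Lemma, pre-glueing), and since there are no indicial roots of $\Delta^2_{\omega_Y}$ in the open interval $(4-2n,0)$, the standard separation-of-variables decay estimate on the cone forces the neck contribution to be controlled by the values at the two ends of the neck, so it cannot carry mass in the limit.

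The main obstacle — and the only place requiring genuine care rather than citation — is the bookkeeping of the weights through the rescaling $\sigma_{\lambda_k}$, i.e. checking that under the scaling $\sigma_{\lambda}^\ast \omega_Y = \lambda^{-2}\omega_Y$ and the chosen neck radius $r(\lambda) = \lambda^{2n/(2n+\mu)}$, the weighted norms on the two pieces glue compatibly and the limiting problem on $\hat Y$ genuinely sees the $\delta$-weighted norm (with $\delta \in (4-2n,0)$ in the non-indicial range). This is exactly the content of the norm-equivalence displayed before the statement of the Proposition, so in fact the argument reduces to assembling the two model isomorphisms (Lemmas \ref{LA} and \ref{LB}) plus the indicial-root gap; given the close parallel with \cite{Szekelyhidi}, I would present the contradiction setup in full and then refer to loc.\ cit.\ for the routine elliptic estimates.
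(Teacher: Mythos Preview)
Your proposal is correct but takes a genuinely different route from the paper. The paper does \emph{not} argue by contradiction and blow-up; instead it constructs an explicit approximate right inverse (a parametrix)
\[
\mathbb{R}_\lambda(\varphi)=\beta_1\,\tilde{\mathbb{L}}^{-1}_\omega(\chi\varphi)+\lambda^{-4}\beta_2\,\mathbb{L}^{-1}_{\omega_{\hat Y}}((1-\chi)\varphi),
\]
using the inverses from Lemmas \ref{LA} and \ref{LB} and carefully chosen cut-offs $\beta_i$ with $|\nabla\beta_i|_{C^{3,\alpha}_{-1}}\le C/|\log\lambda|$; it then checks directly that $|\tilde{\mathbb{L}}_\lambda\circ\mathbb{R}_\lambda-\mathrm{Id}|\to 0$, so that $\tilde{\mathbb{L}}_\lambda^{-1}=\mathbb{R}_\lambda\circ(\tilde{\mathbb{L}}_\lambda\circ\mathbb{R}_\lambda)^{-1}$ is uniformly bounded. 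Your contradiction-and-rescaling argument is equally standard and equally valid in this setting, and it draws on exactly the same two model isomorphisms; it has the advantage of avoiding the bookkeeping of the parametrix error terms, at the cost of being less constructive and requiring care in the ``neck'' subcase you correctly single out (no indicial roots in $(4-2n,0)$). One small caveat: you describe your approach as ``exactly as in Theorem 7.9 of \cite{Szekelyhidi}'', but the paper attributes the \emph{parametrix} method to that same reference, so you may want to double-check which argument actually appears there before citing it verbatim.
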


\begin{proof}
 The basic idea consists in constructing an approximate right inverse. Using the cut off functions $\chi$ in the definition of the preglued metric $\omega_\lambda$, we define the bounded operator
 $\mathbb{R}_\lambda:C^{0,\alpha}_{\delta-4}(\hat{X})\rightarrow C^{4,\alpha}_\delta(\hat{X})$, given by
 $$\mathbb{R}_\lambda(\varphi):=\beta_1\tilde{\mathbb{L}}^{-1}_\omega(\chi \varphi)+ \lambda^{-4} \beta_2 \mathbb{L}_{\eta}^{-1}((1-\chi)\varphi).$$
 where the inverses of the operators on the two pieces exists thanks to Lemmas \ref{LA} and \ref{LB}, and the function $\beta_i$ are cut off functions, identically equal to one on the the glueing region, satisfying the property $|\nabla \beta_1|_{C^{3,\alpha}_{-1}} \leq \frac{C}{\log(\lambda)}$ (and similarly for $\beta_2$). For example, take $\beta_1=\beta\left(\log(r)/\log(\lambda)\right)$ with $\beta(x)$ cut-off function equal to one for $x< \frac{2n}{2n+\mu}$ and equal to zero for $x> b > \frac{2n}{2n+\mu}$ with $b<1$. 
 
 With these choices is then possible to check that $|\tilde{\mathbb{L}}_\lambda \circ \tilde{\mathbb{R}}_\lambda-1|_{B(C^{0,\alpha}_{\delta-4},C^{0,\alpha}_{\delta-4})}  \rightarrow 0,$ as $\lambda\rightarrow 0$. Thus $\tilde{\mathbb{L}}_\lambda \circ \tilde{\mathbb{R}}_\lambda$ is invertible with bounded (independent of $\lambda$) norm. The uniformly bounded inverse is then given by $\tilde{\mathbb{L}}_\lambda^{-1}:=\tilde{\mathbb{R}}_\lambda \circ (\tilde{\mathbb{L}}_\lambda \circ \tilde{\mathbb{R}}_\lambda)^{-1}$.
 
 \end{proof}

\subsubsection{Deformation to a genuine solution}

We begin this section by an estimate of the ``error'' term in the pre-glueing construction. 
The term we need to estimate is
$$\varepsilon(\lambda)=Sc(\omega_\lambda)-Sc(\omega),$$
where $Sc(\omega)$ is the constant value of the scalar curvature of $(X,\omega)$.
\begin{lem}\label{error} The error term can be estimated as
 $$|\varepsilon(\lambda)|_{C^{0,\alpha}_{\delta-4}(\hat{X})}= \mathcal{O}(\lambda^{\frac{2n(\mu+2-\delta)}{2n+\mu}})$$
\end{lem}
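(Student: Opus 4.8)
The plan is to estimate $\varepsilon(\lambda) = Sc(\omega_\lambda) - Sc(\omega)$ region by region, using the explicit description of the pre-glued metric $\omega_\lambda$ in \eqref{PG}, and then to convert a pointwise bound into the weighted $C^{0,\alpha}_{\delta-4}$ bound by keeping track of the weight factor. On $X \setminus B(2r_i(\lambda))$ the metric $\omega_\lambda$ equals $\omega$, so $\varepsilon(\lambda) \equiv 0$ there; similarly on the rescaled resolution piece $\pi_i^{-1}(\sigma_\lambda(B(r_i(\lambda))))$ the metric is $\lambda^2 \hat\omega_{Y_i}$, which is Ricci-flat (being an AC Calabi--Yau metric), so $Sc(\omega_\lambda) = 0$ there and $\varepsilon(\lambda) = -Sc(\omega)$, a constant of size $\mathcal{O}(1)$, but supported where the weight $\rho \sim \lambda$ is small. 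The only genuinely nontrivial contribution comes from the annular glueing region $B(2r_i(\lambda)) \setminus B(r_i(\lambda))$, where $\omega_\lambda = i\partial\bar\partial(r_i^2 + \chi_i f_i + \lambda^2(1-\chi_i)\, g\circ\sigma_\lambda^i)$.

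First I would treat the glueing annulus. Here $Sc(\omega_{Y_i}) = 0$ (cone metric is CY), so $\varepsilon(\lambda) = Sc(\omega_\lambda) - Sc(\omega) = \bigl(Sc(\omega_\lambda) - Sc(\omega_{Y_i})\bigr) - \bigl(Sc(\omega) - Sc(\omega_{Y_i})\bigr)$. The second bracket is $\mathcal{O}(r^{\mu_i - 2})$ by the definition of conically singular metric (the scalar curvature is a second-order expression in $g$, and $g - g_{Y_i} = \mathcal{O}(r^{\mu_i})$ with corresponding derivative decay). For the first bracket one uses that $\omega_\lambda - \omega_{Y_i} = i\partial\bar\partial(\chi_i f_i + \lambda^2(1-\chi_i)g\circ\sigma_\lambda^i)$, and on $[r_i(\lambda), 2r_i(\lambda)]$ one has, exactly as in the proof that $\omega_\lambda$ is positive, that $\chi_i f_i = \mathcal{O}(r^{\mu_i+2})$ and $\lambda^2\, g\circ\sigma_\lambda^i \sim r^{2+\mu_i}$ with all derivatives scaling homogeneously (this is precisely why $r_i(\lambda) = \lambda^{2n/(2n+\mu_i)}$ was chosen, matching the $r^{-2n}$ decay of $g$ against the $\lambda^2$ rescaling). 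Feeding $\|\omega_\lambda - \omega_{Y_i}\|$ of size $\mathcal{O}(r^{\mu_i})$ into the schematic formula $Sc = g^{-1}\partial^2 g + (g^{-1}\partial g)^2$ and subtracting the cone value gives $Sc(\omega_\lambda) - Sc(\omega_{Y_i}) = \mathcal{O}(r^{\mu_i-2})$ as well. Hence pointwise on the annulus $|\varepsilon(\lambda)| = \mathcal{O}(r^{\mu_i - 2})$, and since $r \sim r_i(\lambda) = \lambda^{2n/(2n+\mu_i)}$ there, this is $\mathcal{O}(\lambda^{\frac{2n(\mu_i-2)}{2n+\mu_i}})$.

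Finally I would assemble the weighted norm. By definition of $C^{0,\alpha}_{\delta-4}(\hat X)$, the norm weights the sup of $|\varepsilon(\lambda)|$ by $\rho^{4-\delta}$ (plus the analogous Hölder term, which is controlled the same way since all the relevant quantities have matching derivative decay). On the annular region $\rho \sim r \sim \lambda^{2n/(2n+\mu)}$, so the weighted sup is $\mathcal{O}\bigl(\lambda^{\frac{2n(\mu-2)}{2n+\mu}} \cdot \lambda^{\frac{2n(4-\delta)}{2n+\mu}}\bigr) = \mathcal{O}\bigl(\lambda^{\frac{2n(\mu+2-\delta)}{2n+\mu}}\bigr)$; on the inner resolution region where $\varepsilon(\lambda) = -Sc(\omega)$ is $\mathcal{O}(1)$ but $\rho \lesssim \lambda \le \lambda^{2n/(2n+\mu)}$ (since $2n/(2n+\mu) < 1$), the weighted contribution is $\mathcal{O}(\lambda^{4-\delta})$, which for $\delta < 0$ and $\mu > 0$ is smaller than the annular term; and on the outer region $\varepsilon(\lambda) = 0$. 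Taking the maximum over regions and over the finitely many singular points gives $|\varepsilon(\lambda)|_{C^{0,\alpha}_{\delta-4}(\hat X)} = \mathcal{O}(\lambda^{\frac{2n(\mu+2-\delta)}{2n+\mu}})$, as claimed. The main obstacle is the bookkeeping in the glueing annulus: one must verify carefully that the cut-off $\chi_i$, which varies on the scale $r_i(\lambda)$, does not worsen the estimate — this works because each derivative of $\chi_i$ costs a factor $r_i(\lambda)^{-1} \sim r^{-1}$, exactly the homogeneous scaling already built into the $r^{\mu_i-2}$ bound, so no logarithmic or power loss occurs.
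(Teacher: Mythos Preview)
Your argument follows essentially the same three--region decomposition as the paper's proof, with the annular glueing region supplying the dominant contribution $\mathcal{O}(r_\lambda^{\mu+2-\delta})$ and the outer region contributing nothing. One small slip: on the inner resolution region $C_\lambda=\pi^{-1}(\sigma_\lambda(B(r(\lambda))))$ the weight $\rho$ is \emph{not} uniformly $\lesssim\lambda$; by the definition of $\rho$ on $\hat X$ it equals $r$ for $r\geq 2\lambda$, and since $r(\lambda)=\lambda^{2n/(2n+\mu)}\gg\lambda$, the weight on $C_\lambda$ ranges up to $r_\lambda$. The correct bound there is thus $|\varepsilon(\lambda)|_{C^{0,\alpha}_{\delta-4}(C_\lambda)}=\mathcal{O}(r_\lambda^{4-\delta})$ rather than $\mathcal{O}(\lambda^{4-\delta})$, exactly as the paper records; this is still dominated by the annular term $\mathcal{O}(r_\lambda^{\mu+2-\delta})$ because $\mu\leq 2$, so your conclusion stands.
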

\begin{proof}
We divide the manifold $\hat{X}_\lambda$ in three parts: let $A_\lambda$ be the region  $ X\setminus B(2r(\lambda))$,  $B_\lambda=B(2r(\lambda)) \setminus B(r(\lambda))$ and $C_\lambda=\pi^{-1}(\sigma_{\lambda}(B(r(\lambda)))$

\emph{Estimate region} $A_\lambda$: since on this region the pre-glued metric coincides with $\omega$, the error term $\varepsilon(\lambda)$ vanishes identically.

\emph{Estimate region} $B_\lambda$ (Glueing region): The pre-glued metric satisfies: $\omega_\lambda=i \partial \bar \partial (r^2 + \psi)$ with $\psi$ smooth function satisfying $\psi=\mathcal{O}(r^{2+\mu})$, and similarly for the derivatives. Hence
$$|\psi|_{C^{4,\alpha}_\delta (B_\lambda)} = \mathcal{O}(r_\lambda^{\mu+2-\delta}).$$
Observe that, by the properties of the weighted norms, the above estimate implies $|\psi|_{C^{4,\alpha}_2(B_\lambda)}=\mathcal{O}(r_\lambda^{\mu}).$ Hence, denoting with $\omega_Y$ be the CY (in particular scalar flat) cone metric, the first estimate in \ref{Est} implies:
$$|Sc(\omega_\lambda)|_{C^{0,\alpha}_{\delta-4} (B_\lambda)}=|Sc(\omega_\lambda)-Sc(\omega_{Y})|_{C^{0,\alpha}_{\delta-4} (B_\lambda)}\leq C r_\lambda^{\mu+2-\delta}.$$ Thus, since $\mu \in (0,2]$:
$$|\varepsilon(\lambda)|_{C^{0,\alpha}_{\delta-4}(B_\lambda)}\leq C r^{\mu+2-\delta} +|Sc(\omega)|_{C^{0,\alpha}_{\delta-4}(B_\lambda)}\leq \tilde{C} r_{\lambda}^{\mu+2-\delta},$$

\emph{Estimate region} $C_\lambda$: Since $Sc(\omega_\lambda)=0$ and $\lambda^{\frac{2n}{2n+\mu}}=r_\lambda>> \lambda$,
$$|\varepsilon(\lambda)|_{C^{0,\alpha}_{\delta-4}(C_\lambda)}= |Sc(\omega)|_{C^{0,\alpha}_{\delta-4}(C_\lambda)}=  \mathcal{O}( r_\lambda^{4-\delta}).$$

To conclude the proof of the lemma, simply recall that by our choices $r_\lambda \sim  \lambda^{\frac{2n}{2n+\mu}}$.
\end{proof} 

To prove the  Theorem \ref{MT} it is now sufficient to apply an usual fixed point argument. We want to solve the equation $$Sc(\omega_\lambda + i\partial \bar \partial \varphi ) = Sc(\omega)+\varphi (q) \; \left( = Sc^{\ast}(\omega_\lambda)\in \R \right),$$  
for a fixed $q \in X^{reg} $ sufficiently away from the singularities. This is of course equivalent to find a fixed point of the map:
$\mathcal{F}_\lambda: C^{4,\alpha}_\delta \rightarrow C^{4,\alpha}_\delta $ defined by
$$\mathcal{F}_\lambda(\varphi):=\tilde{\mathbb{L}}_\lambda^{-1} \left( \varepsilon(\lambda) -\N_\lambda(\varphi) \right),$$
where $\tilde{\mathbb{L}}_\lambda^{-1}$ is the uniformly bounded inverse of the linearized operator constructed in Lemma \ref{UI} for $\delta \in (4-2n,0)$ and  $\N_\lambda$ collects the non-linearities. 

Now, if the pointwise norm $|i\partial \bar \partial \varphi_j|$ (and its derivatives) is sufficiently small compared to the CY model metric (which happens in particular if $|\varphi_j|_{C^{4,\alpha}_2}$ is less than or equal to a small $b>0$ depending on the model metric), it is a known fact  $\cite{Arezzo-Pacard}$ that the non-linearities are in fact controlled by their the quadratic part:
$$|\N_\lambda(\varphi_1)-\N_\lambda(\varphi_2)|_{C^{0,\alpha}_{\delta-4}} \leq C \left( |\varphi_1|_{C^{4,\alpha}_2}+ |\varphi_2|_{C^{4,\alpha}_2}\right) |\varphi_1-\varphi_2|_ {C^{4,\alpha}_\delta}.$$
Hence, working in a sufficiently small ball in the $C^{4,\alpha}_2$-topology, it follows by the uniform invertibility of $\tilde{\mathbb{L}}_\lambda$ in  $C^{4,\alpha}_\delta$, that $\mathcal{F}_\lambda$ is Lipschitz, with Lipschitz costant strictly less than $1$.

To apply the contraction principle, one needs  to work in the ball $|\phi|_{C^{4,\alpha}_\delta} \leq \tilde{b} \lambda^{2-\delta}$, where the constant $\tilde{b}$ is chosen so that 
$$|\varphi|_{C^{4,\alpha}_2}\leq \tilde{C} \lambda^{\delta-2} |\phi|_{C^{4,\alpha}_\delta}\leq \tilde{C} \tilde{b},$$ is so small that, following the argument above,  $\mathcal{F}_\lambda$ has Lipschitz constant less than one. Thus, in order to have that $\mathcal{F}_\lambda$ sends such $\tilde{b}\lambda^{2-\delta}$-ball into itself, we want
$$  |\mathcal{F}_\lambda(0)|_{C^{4,\alpha}_\delta} \leq  C_1 |\varepsilon(\lambda)|_{C^{0,\alpha}_{\delta-4}} << \lambda^{2-\delta}. $$
Applying the above Lemma \ref{error}, we see that this happens if  $$\frac{2n(\mu+2-\delta)}{2n+\mu}>2-\delta.$$ Thus, since $\mu$ is positive, it is sufficient to take $\delta>4-2n>2-2n$. In conclusion, we have constructed  constant scalar curvatures K\"ahler metric on $\hat{X}$ as long as $\lambda$ is taken to be sufficiently small. The actual value of the constant scalar curvature is then given by Lemma \ref{com}.

\section{Examples $\&$ Discussion} \label{E}

We begin this final section by showing what happens if, in our main Theorem \ref{MT}, the starting singular variety $(X,\omega)$ is a conicaly singular Ricci-flat CY variety (i.e., we assume in addition that $K_{X}$ is a trivial $\Q$-Cartier divisor), hence proving Corollary \ref{CYcor}.
Let $\omega_\lambda$ the cscK metrics constructed in Theorem \ref{MT}. Since the resolution $\hat{X}$ is assumed by hypothesis to be crepant, we know that its first Chern class $c_1(\hat X)$ must be trivial in real De Rham cohomology. Moreover, since $$0=c_1(\hat X)\cap [\omega_\lambda]^{n-1} =n S_\lambda Vol(\omega_\lambda),$$ we have that $S_\lambda\equiv 0$. But, as the pre-glueing shows, $\hat{X}$ is K\"ahler (hence the $i\partial\bar\partial$-lemma holds) and thus $Ric(\omega_\lambda)=i\partial\bar\partial \phi_\lambda$ globally. So, taking the trace of the previous equation,  we get that $\phi_\lambda$ must be constant (being harmonic), hence proving the Corollary.

Next we search for other situations where the main Theorem \ref{MT} should apply. We will exhibit a variety (actually many of them) satisfying all the hypothesis of the main Theorem \ref{MT} \emph{besides the asymptotic expansion near the singularities}, as discussed in the introduction. 

The natural source of examples is given by varieties in the non-collapsing part of the boundary of the compactification of the moduli space of K\"ahler-Einstein manifolds. The next explicit example is found by looking to some boundary component in the moduli spaces of projective complete intersections of sufficiently high degree. Basically, all we need to ensure is that the singularities of such boundary variety are isolated and satisfy the hypothesis we need.
Thus, let us consider the following complete intersection in $\P^6$:
$$ X_s:=\begin{cases} \sum_{i=0}^4 s_i x_i^4 + y^4+z^4=0 \\ \sum_{i=0}^4  x_i^4=0 \end{cases},$$
where $s_i \in \C$ are generic (in particular all different). Then: 
\begin{prop} \label{Ex} The 4-fold $X_t$ defined as above is a klt variety with ample canonical line bundle with exactly four singularities analytically equivalent to the cone over the Fermat's hyperquartic, that is with singularities locally given by the equation $\sum_{i=0}^4  x_i^4=0$ in $\C^5$. Moreover, it admits a weak KE metric of negative scalar curvature and the singularities admits AC CY crepant resolutions.
\end{prop}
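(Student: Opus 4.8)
The plan is to establish the four asserted properties in turn: (i) pin down the canonical bundle and the singular locus; (ii) identify the analytic germ of $X_s$ at each singular point; (iii) deduce that $X_s$ is klt and invoke \cite{EGZ}; and (iv) produce the asymptotically conical Calabi--Yau crepant resolutions through Lemma \ref{CA}. First, since $X_s\subseteq \P^6$ is a complete intersection of two quartics, adjunction for the dualizing sheaf gives $K_{X_s}=\mathcal{O}_{X_s}(4+4-7)=\mathcal{O}_{X_s}(1)$, which is a genuine ample line bundle ($X_s$ being Gorenstein). For the singular locus I would apply the Jacobian criterion to $F=\sum_{i=0}^4 s_i x_i^4+y^4+z^4$ and $G=\sum_{i=0}^4 x_i^4$: the $2\times 2$ minors of the Jacobian are proportional to $(s_i-s_j)x_i^3x_j^3$ for $i\neq j$ in $\{0,\dots,4\}$ and to $y^3x_i^3$, $z^3x_i^3$ for the two last columns. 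Genericity of the $s_i$ (pairwise distinct) forces at most one $x_i$ to be nonzero on $\mbox{Sing}(X_s)$, and then the remaining minors force $y=z=0$; but the resulting coordinate points lie off $X_s$ since they violate $G=0$. Hence all $x_i=0$, and imposing $F=G=0$ leaves exactly the four points $p_\zeta=[0:\cdots:0:1:\zeta]$ with $\zeta^4=-1$, each genuinely singular because the Jacobian there has rank one. As a by-product $\mbox{Sing}(X_s)$ has codimension $4\geq 2$, so $X_s$ is regular in codimension one; being a complete intersection it is Cohen--Macaulay, hence normal by Serre's criterion and, being connected, irreducible. Having ample canonical bundle, $X_s$ is of general type, so $\mathrm{Aut}(X_s)$ is finite.

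Next, in the affine chart $y=1$ with coordinates $(x_0,\dots,x_4,z)$ one has $\partial F/\partial z=4z^3\neq 0$ at $p_\zeta$, so the holomorphic implicit function theorem solves $F=0$ as $z=\phi(x_0,\dots,x_4)$ with $\phi(0)=\zeta$. Substituting and using that $G$ does not involve $z$, a neighbourhood of $p_\zeta$ in $X_s$ becomes biholomorphic to a neighbourhood of the vertex of the affine cone $\mathcal{C}=\{\sum_{i=0}^4 x_i^4=0\}\subseteq \C^5$, that is, the cone over the smooth Fermat quartic threefold $\bar M\subseteq \P^4$; this is the claimed local normal form.

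For the remaining assertions, adjunction gives $K_{\bar M}=\mathcal{O}_{\bar M}(-1)$, so $\bar M$ is a smooth Fano threefold, and $\mathcal{C}$ is a Gorenstein hypersurface germ with trivial canonical sheaf (Poincar\'e residue of $dx_0\wedge\cdots\wedge dx_4$). Its blow-up at the vertex is $\mathrm{Tot}(\mathcal{O}_{\bar M}(-1))=\mathrm{Tot}(K_{\bar M})$, which is Calabi--Yau; in particular the resolution is crepant and $\mathcal{C}$ is canonical, hence klt. Since klt is an analytic-local condition, $X_s$ is klt with $K_{X_s}$ ample, and \cite{EGZ} supplies a unique singular K\"ahler--Einstein metric $\omega$ with $\mathrm{Ric}(\omega)=-\omega$ on $X_s^{reg}$ and bounded local potentials, so $\omega$ is cscK of negative scalar curvature. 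Finally, since the Fermat quartic threefold admits a K\"ahler--Einstein metric \cite{Tian}, Lemma \ref{CA} (with $n=4$, its proof covering the Fermat case) endows $\mathrm{Tot}(K_{\bar M})=Bl_0(\mathcal{C})$ with an AC Calabi--Yau metric $i\partial\bar\partial$-exact at infinity, of rate $-2n=-8$ by Van Coevering's theorem \cite{VCov}. Thus each of the four singularities of $X_s$ carries an AC CY crepant resolution of rate $-2n$ and $i\partial\bar\partial$-exact at infinity, exactly as required by Theorem \ref{MT}.

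The step I expect to require the most care is the second one: making the local normal form at the $p_\zeta$ rigorous, in particular verifying that the implicit-function elimination genuinely yields the bare Fermat equation $\sum_{i=0}^4 x_i^4=0$ with no residual twisting, together with a careful use of the genericity of the $s_i$ to exclude all singularities other than the four expected points. The rest is essentially bookkeeping on top of the cited results \cite{EGZ}, \cite{Tian}, \cite{VCov} and Lemma \ref{CA}.
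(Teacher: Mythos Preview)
Your argument is correct and follows essentially the same route as the paper: adjunction for $K_{X_s}=\mathcal{O}(1)$, the Jacobian criterion to locate the four singular points, the implicit-function/change-of-variables step (the paper writes it as the coordinate change $z\mapsto t=\sum s_i x_i^4+1+z^4$, which is exactly your elimination of $z$ via $\partial F/\partial z=4z^3\neq 0$), then \cite{EGZ} for the weak KE metric and Lemma~\ref{CA} for the AC CY resolution. Your write-up is in fact slightly more careful than the paper's (you make normality explicit via Serre's criterion and correctly identify $\zeta^4=-1$ rather than the paper's apparent slip $\sigma^2=-1$).
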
 
\begin{proof}
 The fact that the canonical divisor is an ample Cartier divisor follows immediatly by the adjunction formula. To see that the singularities are the ones claimed, note that, by the symmetries of the defining equations, it is sufficient to check if some singularities appear in the charts $y \neq 0$ and $x_0 \neq 0$. Suppose first $y \neq 0$ and  assume that $z\neq 0$. By looking to the Jacobian of the equations, it is easy to see that the only singularities are given by $[0:0:0:0:0:1,\sigma]$ with $\sigma^2=-1$. If $z=0$, the Jacobian has no maximal rank only if all but eventually one of the $x_i$ vanish (due to the genericity of the $s_i$s), and clearly there are no such points on the variety.
Similarly one can show that there are no singular points in the chart $x_0\neq 0$. Thus it remains to show that the singularities are of the desired analytic type. Take $y=1$ and define a new variable $t:=\sum_{i=0}^4 s_i x_i^4 + 1+z^4$. It is clear that such change of variables (the $x_i$s being fixed) defines a local analytic biholomorphism away from $z=0$ (thus in particular at the point of the singularities). Thus, locally analytically, the singularities are given in the $(t,x_i)$ coordinates as the transverse intersection of $t=0$ with the bundle of cones $\sum_{i=0}^4  x_i^4=0$, thus proving the first part of the claims. 

Since $X_s$ has canonical singularities, on $X_s$ there exists a unique weak KE metric of negative scalar curvature \cite{EGZ}. Finally observe that the singularities of $X_s$ admits AC CY resolution thanks to the Calabi's Ansatz (see Lemma \ref{CA} and subsequent remark).  
\end{proof}

\begin{rmk}
 \begin{itemize}
  \item We note that the compex analytic variety $\hat{X_s}$ constructed via ``pre-glueing'' in Proposition \ref{Ex} is the algebro-geometric blow-up  $Bl_{Sing(X_s)}(X_s)$ at the singular set of $X_s$. 
  \item $X_s$ is one of the simplest example of complete intersection varieties where the result applies. Unfortunately 3-folds with ordinary double points are excluded by the range of applicability of our main Theorem (since small resolutions admits non $i\partial \bar \partial$-exact metrics at infinity). However, it may be possible to find examples by taking suitable global $\Z_2$ quotients (with action free on $X^{reg}$ and which, locally near the singularities, agrees with the action required by the Calabi's Ansatz) of the ubiquitous varieties having only ordinary double points as singularities (sometimes called hyperconifolds in the physics literature). 
  \item To construct manifolds with \emph{positive} constant scalar curvature, one could start by K-polystable Fano varieties in the K-moduli compactification. However showing existence of a weak KE metrics, for example using some alpha-invariant computation \cite{BBEGZ}, may be very complicated even for Fano complete intersections.
 \end{itemize}
\end{rmk}

Let us end by observing the role played by the CY condition on $\hat{Y}$: compared to the more general (and certainly natural) scalar-flat case, the only place where $\mbox{Ric}=0$ was really used in the proof has essentially been only Lemma $\ref{LA}$. We believe this would still be true for scalar-flat models. On the other hand, since at the moment no examples of scalar-flat K\"ahler AC non-Ricci flat models are known, we have not investigated this problem further.

\end{document}